\documentclass{amsart}
\usepackage[latin1]{inputenc}
\usepackage[english]{babel}
\usepackage{csquotes}

\usepackage{color} 
\usepackage[colorlinks=true,linktoc=all,linkcolor=blue,citecolor=red,filecolor=pink,urlcolor=blue]{hyperref}
\usepackage[hyperref=true,style=alphabetic,citestyle=alphabetic,sorting=nyt,backend=bibtex,maxnames=100,doi=false,isbn=false,url=false,giveninits=true]{biblatex}
\bibliography{biblio}

\usepackage{graphicx}
\graphicspath{{pictures/}}

\usepackage[mathcal,mathscr]{euscript}
\usepackage{mathrsfs}

\usepackage{tikz}
\usetikzlibrary{arrows}

\usepackage{todonotes}

\usepackage{fouriernc}
\usepackage[scaled=0.875]{helvet}
\usepackage{courier}

\linespread{1.05} 

\usepackage{amssymb,amsmath,latexsym}
\theoremstyle{plain}                    
\newtheorem{theorem}{Theorem}[section]
\newtheorem{lemma}[theorem]{Lemma}
\newtheorem{proposition}[theorem]{Proposition}

\newtheorem{question}[theorem]{Question}
\theoremstyle{definition}

\theoremstyle{remark}
\newtheorem{remark}[theorem]{Remark}

\numberwithin{equation}{section}

\newcommand{\nn}{\mathbb N}
\newcommand{\qq}{\mathbb Q}
\newcommand{\rr}{\mathbb R}
\newcommand{\cc}{\mathbb C}
\newcommand{\hh}{\mathbb H}

\newcommand{\lk}[2]{\operatorname{lk}(#1,#2)} 
\newcommand{\st}[2]{\operatorname{st}(#1,#2)} 
\newcommand{\pcd}{\operatorname{pcd}}

\newcommand{\grbd}{\partial_\infty} 

\newcommand{\six}{\mathsf{C}_{600}}

\newcommand{\ttop}{T_{1}}
\newcommand{\tbtm}{T_{0}}
\newcommand{\interface}{I}

\newcommand{\T}{T_{21}}

\newcommand{\Sb}{\mathbb{S}}

\usepackage[inline]{enumitem}
\setenumerate[1]{label=\textbf{\arabic*.}}
\renewcommand{\leq}{\leqslant}
\renewcommand{\geq}{\geqslant}
\renewcommand{\setminus}{\smallsetminus}

  \usepackage[breakable]{tcolorbox}
    \usepackage{parskip} 

    
    \usepackage{caption}

    \usepackage{float}
    \floatplacement{figure}{H} 
    \usepackage{xcolor} 
    \usepackage{geometry} 
    \geometry{hmargin=2.5cm, vmargin=2.5cm }
    \usepackage{textcomp} 
    \AtBeginDocument{%
    }
    \usepackage{upquote} 
    \usepackage{eurosym} 

    \usepackage{iftex}
    \ifPDFTeX
        \usepackage[T1]{fontenc}
        \IfFileExists{alphabeta.sty}{
              \usepackage{alphabeta}
          }{
              \usepackage[mathletters]{ucs}
              \usepackage[utf8x]{inputenc}
          }
    \else
        \usepackage{fontspec}
        \usepackage{unicode-math}
    \fi

    \usepackage{fancyvrb} 
    \usepackage[Export]{adjustbox} 
    \adjustboxset{max size={0.9\linewidth}{0.9\paperheight}}

    \usepackage{longtable} 
    \usepackage{booktabs}  
    \usepackage{array}     
    \usepackage{calc}      
    \usepackage[normalem]{ulem} 
 \usepackage{mathrsfs}

    \definecolor{urlcolor}{rgb}{0,.145,.698}
    \definecolor{linkcolor}{rgb}{.71,0.21,0.01}
    \definecolor{citecolor}{rgb}{.12,.54,.11}

    \definecolor{ansi-black}{HTML}{3E424D}
    \definecolor{ansi-black-intense}{HTML}{282C36}
    \definecolor{ansi-red}{HTML}{E75C58}
    \definecolor{ansi-red-intense}{HTML}{B22B31}
    \definecolor{ansi-green}{HTML}{00A250}
    \definecolor{ansi-green-intense}{HTML}{007427}
    \definecolor{ansi-yellow}{HTML}{DDB62B}
    \definecolor{ansi-yellow-intense}{HTML}{B27D12}
    \definecolor{ansi-blue}{HTML}{208FFB}
    \definecolor{ansi-blue-intense}{HTML}{0065CA}
    \definecolor{ansi-magenta}{HTML}{D160C4}
    \definecolor{ansi-magenta-intense}{HTML}{A03196}
    \definecolor{ansi-cyan}{HTML}{60C6C8}
    \definecolor{ansi-cyan-intense}{HTML}{258F8F}
    \definecolor{ansi-white}{HTML}{C5C1B4}
    \definecolor{ansi-white-intense}{HTML}{A1A6B2}
    \definecolor{ansi-default-inverse-fg}{HTML}{FFFFFF}
    \definecolor{ansi-default-inverse-bg}{HTML}{000000}

    \definecolor{outerrorbackground}{HTML}{FFDFDF}

    
    \DefineVerbatimEnvironment{Highlighting}{Verbatim}{commandchars=\\\{\}}


    

    \let\Oldtex\TeX
    \let\Oldlatex\LaTeX
    \renewcommand{\TeX}{\textrm{\Oldtex}}
    \renewcommand{\LaTeX}{\textrm{\Oldlatex}}

\makeatletter
\def\PY@reset{\let\PY@it=\relax \let\PY@bf=\relax%
    \let\PY@ul=\relax \let\PY@tc=\relax%
    \let\PY@bc=\relax \let\PY@ff=\relax}
\def\PY@tok#1{\csname PY@tok@#1\endcsname}
\def\PY@toks#1+{\ifx\relax#1\empty\else%
    \PY@tok{#1}\expandafter\PY@toks\fi}
\def\PY@do#1{\PY@bc{\PY@tc{\PY@ul{%
    \PY@it{\PY@bf{\PY@ff{#1}}}}}}}
\def\PY#1#2{\PY@reset\PY@toks#1+\relax+\PY@do{#2}}

\@namedef{PY@tok@w}{\def\PY@tc##1{\textcolor[rgb]{0.73,0.73,0.73}{##1}}}
\@namedef{PY@tok@c}{\let\PY@it=\textit\def\PY@tc##1{\textcolor[rgb]{0.24,0.48,0.48}{##1}}}
\@namedef{PY@tok@cp}{\def\PY@tc##1{\textcolor[rgb]{0.61,0.40,0.00}{##1}}}
\@namedef{PY@tok@k}{\let\PY@bf=\textbf\def\PY@tc##1{\textcolor[rgb]{0.00,0.50,0.00}{##1}}}
\@namedef{PY@tok@kp}{\def\PY@tc##1{\textcolor[rgb]{0.00,0.50,0.00}{##1}}}
\@namedef{PY@tok@kt}{\def\PY@tc##1{\textcolor[rgb]{0.69,0.00,0.25}{##1}}}
\@namedef{PY@tok@o}{\def\PY@tc##1{\textcolor[rgb]{0.40,0.40,0.40}{##1}}}
\@namedef{PY@tok@ow}{\let\PY@bf=\textbf\def\PY@tc##1{\textcolor[rgb]{0.67,0.13,1.00}{##1}}}
\@namedef{PY@tok@nb}{\def\PY@tc##1{\textcolor[rgb]{0.00,0.50,0.00}{##1}}}
\@namedef{PY@tok@nf}{\def\PY@tc##1{\textcolor[rgb]{0.00,0.00,1.00}{##1}}}
\@namedef{PY@tok@nc}{\let\PY@bf=\textbf\def\PY@tc##1{\textcolor[rgb]{0.00,0.00,1.00}{##1}}}
\@namedef{PY@tok@nn}{\let\PY@bf=\textbf\def\PY@tc##1{\textcolor[rgb]{0.00,0.00,1.00}{##1}}}
\@namedef{PY@tok@ne}{\let\PY@bf=\textbf\def\PY@tc##1{\textcolor[rgb]{0.80,0.25,0.22}{##1}}}
\@namedef{PY@tok@nv}{\def\PY@tc##1{\textcolor[rgb]{0.10,0.09,0.49}{##1}}}
\@namedef{PY@tok@no}{\def\PY@tc##1{\textcolor[rgb]{0.53,0.00,0.00}{##1}}}
\@namedef{PY@tok@nl}{\def\PY@tc##1{\textcolor[rgb]{0.46,0.46,0.00}{##1}}}
\@namedef{PY@tok@ni}{\let\PY@bf=\textbf\def\PY@tc##1{\textcolor[rgb]{0.44,0.44,0.44}{##1}}}
\@namedef{PY@tok@na}{\def\PY@tc##1{\textcolor[rgb]{0.41,0.47,0.13}{##1}}}
\@namedef{PY@tok@nt}{\let\PY@bf=\textbf\def\PY@tc##1{\textcolor[rgb]{0.00,0.50,0.00}{##1}}}
\@namedef{PY@tok@nd}{\def\PY@tc##1{\textcolor[rgb]{0.67,0.13,1.00}{##1}}}
\@namedef{PY@tok@s}{\def\PY@tc##1{\textcolor[rgb]{0.73,0.13,0.13}{##1}}}
\@namedef{PY@tok@sd}{\let\PY@it=\textit\def\PY@tc##1{\textcolor[rgb]{0.73,0.13,0.13}{##1}}}
\@namedef{PY@tok@si}{\let\PY@bf=\textbf\def\PY@tc##1{\textcolor[rgb]{0.64,0.35,0.47}{##1}}}
\@namedef{PY@tok@se}{\let\PY@bf=\textbf\def\PY@tc##1{\textcolor[rgb]{0.67,0.36,0.12}{##1}}}
\@namedef{PY@tok@sr}{\def\PY@tc##1{\textcolor[rgb]{0.64,0.35,0.47}{##1}}}
\@namedef{PY@tok@ss}{\def\PY@tc##1{\textcolor[rgb]{0.10,0.09,0.49}{##1}}}
\@namedef{PY@tok@sx}{\def\PY@tc##1{\textcolor[rgb]{0.00,0.50,0.00}{##1}}}
\@namedef{PY@tok@m}{\def\PY@tc##1{\textcolor[rgb]{0.40,0.40,0.40}{##1}}}
\@namedef{PY@tok@gh}{\let\PY@bf=\textbf\def\PY@tc##1{\textcolor[rgb]{0.00,0.00,0.50}{##1}}}
\@namedef{PY@tok@gu}{\let\PY@bf=\textbf\def\PY@tc##1{\textcolor[rgb]{0.50,0.00,0.50}{##1}}}
\@namedef{PY@tok@gd}{\def\PY@tc##1{\textcolor[rgb]{0.63,0.00,0.00}{##1}}}
\@namedef{PY@tok@gi}{\def\PY@tc##1{\textcolor[rgb]{0.00,0.52,0.00}{##1}}}
\@namedef{PY@tok@gr}{\def\PY@tc##1{\textcolor[rgb]{0.89,0.00,0.00}{##1}}}
\@namedef{PY@tok@ge}{\let\PY@it=\textit}
\@namedef{PY@tok@gs}{\let\PY@bf=\textbf}
\@namedef{PY@tok@ges}{\let\PY@bf=\textbf\let\PY@it=\textit}
\@namedef{PY@tok@gp}{\let\PY@bf=\textbf\def\PY@tc##1{\textcolor[rgb]{0.00,0.00,0.50}{##1}}}
\@namedef{PY@tok@go}{\def\PY@tc##1{\textcolor[rgb]{0.44,0.44,0.44}{##1}}}
\@namedef{PY@tok@gt}{\def\PY@tc##1{\textcolor[rgb]{0.00,0.27,0.87}{##1}}}
\@namedef{PY@tok@err}{\def\PY@bc##1{{\setlength{\fboxsep}{\string -\fboxrule}\fcolorbox[rgb]{1.00,0.00,0.00}{1,1,1}{\strut ##1}}}}
\@namedef{PY@tok@kc}{\let\PY@bf=\textbf\def\PY@tc##1{\textcolor[rgb]{0.00,0.50,0.00}{##1}}}
\@namedef{PY@tok@kd}{\let\PY@bf=\textbf\def\PY@tc##1{\textcolor[rgb]{0.00,0.50,0.00}{##1}}}
\@namedef{PY@tok@kn}{\let\PY@bf=\textbf\def\PY@tc##1{\textcolor[rgb]{0.00,0.50,0.00}{##1}}}
\@namedef{PY@tok@kr}{\let\PY@bf=\textbf\def\PY@tc##1{\textcolor[rgb]{0.00,0.50,0.00}{##1}}}
\@namedef{PY@tok@bp}{\def\PY@tc##1{\textcolor[rgb]{0.00,0.50,0.00}{##1}}}
\@namedef{PY@tok@fm}{\def\PY@tc##1{\textcolor[rgb]{0.00,0.00,1.00}{##1}}}
\@namedef{PY@tok@vc}{\def\PY@tc##1{\textcolor[rgb]{0.10,0.09,0.49}{##1}}}
\@namedef{PY@tok@vg}{\def\PY@tc##1{\textcolor[rgb]{0.10,0.09,0.49}{##1}}}
\@namedef{PY@tok@vi}{\def\PY@tc##1{\textcolor[rgb]{0.10,0.09,0.49}{##1}}}
\@namedef{PY@tok@vm}{\def\PY@tc##1{\textcolor[rgb]{0.10,0.09,0.49}{##1}}}
\@namedef{PY@tok@sa}{\def\PY@tc##1{\textcolor[rgb]{0.73,0.13,0.13}{##1}}}
\@namedef{PY@tok@sb}{\def\PY@tc##1{\textcolor[rgb]{0.73,0.13,0.13}{##1}}}
\@namedef{PY@tok@sc}{\def\PY@tc##1{\textcolor[rgb]{0.73,0.13,0.13}{##1}}}
\@namedef{PY@tok@dl}{\def\PY@tc##1{\textcolor[rgb]{0.73,0.13,0.13}{##1}}}
\@namedef{PY@tok@s2}{\def\PY@tc##1{\textcolor[rgb]{0.73,0.13,0.13}{##1}}}
\@namedef{PY@tok@sh}{\def\PY@tc##1{\textcolor[rgb]{0.73,0.13,0.13}{##1}}}
\@namedef{PY@tok@s1}{\def\PY@tc##1{\textcolor[rgb]{0.73,0.13,0.13}{##1}}}
\@namedef{PY@tok@mb}{\def\PY@tc##1{\textcolor[rgb]{0.40,0.40,0.40}{##1}}}
\@namedef{PY@tok@mf}{\def\PY@tc##1{\textcolor[rgb]{0.40,0.40,0.40}{##1}}}
\@namedef{PY@tok@mh}{\def\PY@tc##1{\textcolor[rgb]{0.40,0.40,0.40}{##1}}}
\@namedef{PY@tok@mi}{\def\PY@tc##1{\textcolor[rgb]{0.40,0.40,0.40}{##1}}}
\@namedef{PY@tok@il}{\def\PY@tc##1{\textcolor[rgb]{0.40,0.40,0.40}{##1}}}
\@namedef{PY@tok@mo}{\def\PY@tc##1{\textcolor[rgb]{0.40,0.40,0.40}{##1}}}
\@namedef{PY@tok@ch}{\let\PY@it=\textit\def\PY@tc##1{\textcolor[rgb]{0.24,0.48,0.48}{##1}}}
\@namedef{PY@tok@cm}{\let\PY@it=\textit\def\PY@tc##1{\textcolor[rgb]{0.24,0.48,0.48}{##1}}}
\@namedef{PY@tok@cpf}{\let\PY@it=\textit\def\PY@tc##1{\textcolor[rgb]{0.24,0.48,0.48}{##1}}}
\@namedef{PY@tok@c1}{\let\PY@it=\textit\def\PY@tc##1{\textcolor[rgb]{0.24,0.48,0.48}{##1}}}
\@namedef{PY@tok@cs}{\let\PY@it=\textit\def\PY@tc##1{\textcolor[rgb]{0.24,0.48,0.48}{##1}}}


\makeatother

    \makeatletter
        \newbox\Wrappedcontinuationbox 
        \newbox\Wrappedvisiblespacebox 
        \newcommand*\Wrappedvisiblespace {\textcolor{red}{\textvisiblespace}} 
        \newcommand*\Wrappedcontinuationsymbol {\textcolor{red}{\llap{\tiny$\m@th\hookrightarrow$}}} 
        \newcommand*\Wrappedcontinuationindent {3ex } 
        \newcommand*\Wrappedafterbreak {\kern\Wrappedcontinuationindent\copy\Wrappedcontinuationbox} 
        \newcommand*\Wrappedbreaksatspecials {%
            \def\PYGZus{\discretionary{\char`\_}{\Wrappedafterbreak}{\char`\_}}%
            \def\PYGZob{\discretionary{}{\Wrappedafterbreak\char`\{}{\char`\{}}%
            \def\PYGZcb{\discretionary{\char`\}}{\Wrappedafterbreak}{\char`\}}}%
            \def\PYGZca{\discretionary{\char`\^}{\Wrappedafterbreak}{\char`\^}}%
            \def\PYGZam{\discretionary{\char`\&}{\Wrappedafterbreak}{\char`\&}}%
            \def\PYGZlt{\discretionary{}{\Wrappedafterbreak\char`\<}{\char`\<}}%
            \def\PYGZgt{\discretionary{\char`\>}{\Wrappedafterbreak}{\char`\>}}%
            \def\PYGZsh{\discretionary{}{\Wrappedafterbreak\char`\#}{\char`\#}}%
            \def\PYGZpc{\discretionary{}{\Wrappedafterbreak\char`\%}{\char`\%}}%
            \def\PYGZdl{\discretionary{}{\Wrappedafterbreak\char`\$}{\char`\$}}%
            \def\PYGZhy{\discretionary{\char`\-}{\Wrappedafterbreak}{\char`\-}}%
            \def\PYGZsq{\discretionary{}{\Wrappedafterbreak\textquotesingle}{\textquotesingle}}%
            \def\PYGZdq{\discretionary{}{\Wrappedafterbreak\char`\"}{\char`\"}}%
            \def\PYGZti{\discretionary{\char`\~}{\Wrappedafterbreak}{\char`\~}}%
        } 
        \newcommand*\Wrappedbreaksatpunct {%
            \lccode`\~`\.\lowercase{\def~}{\discretionary{\hbox{\char`\.}}{\Wrappedafterbreak}{\hbox{\char`\.}}}%
            \lccode`\~`\,\lowercase{\def~}{\discretionary{\hbox{\char`\,}}{\Wrappedafterbreak}{\hbox{\char`\,}}}%
            \lccode`\~`\;\lowercase{\def~}{\discretionary{\hbox{\char`\;}}{\Wrappedafterbreak}{\hbox{\char`\;}}}%
            \lccode`\~`\:\lowercase{\def~}{\discretionary{\hbox{\char`\:}}{\Wrappedafterbreak}{\hbox{\char`\:}}}%
            \lccode`\~`\?\lowercase{\def~}{\discretionary{\hbox{\char`\?}}{\Wrappedafterbreak}{\hbox{\char`\?}}}%
            \lccode`\~`\!\lowercase{\def~}{\discretionary{\hbox{\char`\!}}{\Wrappedafterbreak}{\hbox{\char`\!}}}%
            \lccode`\~`\/\lowercase{\def~}{\discretionary{\hbox{\char`\/}}{\Wrappedafterbreak}{\hbox{\char`\/}}}%
            \catcode`\.\active
            \catcode`\,\active 
            \catcode`\;\active
            \catcode`\:\active
            \catcode`\?\active
            \catcode`\!\active
            \catcode`\/\active 
            \lccode`\~`\~ 	
        }
    \makeatother

    \let\OriginalVerbatim=\Verbatim
    \makeatletter
    \renewcommand{\Verbatim}[1][1]{%
        \sbox\Wrappedcontinuationbox {\Wrappedcontinuationsymbol}%
        \sbox\Wrappedvisiblespacebox {\FV@SetupFont\Wrappedvisiblespace}%
        \def\FancyVerbFormatLine ##1{\hsize\linewidth
            \vtop{\raggedright\hyphenpenalty\z@\exhyphenpenalty\z@
                \doublehyphendemerits\z@\finalhyphendemerits\z@
                \strut ##1\strut}%
        }%
        \def\FV@Space {%
            \nobreak\hskip\z@ plus\fontdimen3\font minus\fontdimen4\font
            \discretionary{\copy\Wrappedvisiblespacebox}{\Wrappedafterbreak}
            {\kern\fontdimen2\font}%
        }%
        
        \Wrappedbreaksatspecials
        \OriginalVerbatim[#1,codes*=\Wrappedbreaksatpunct]%
    }
    \makeatother

    \definecolor{incolor}{HTML}{303F9F}
    \definecolor{outcolor}{HTML}{D84315}
    \definecolor{cellborder}{HTML}{CFCFCF}
    \definecolor{cellbackground}{HTML}{F7F7F7}
    
    \makeatletter
    \newcommand{\boxspacing}{\kern\kvtcb@left@rule\kern\kvtcb@boxsep}
    \makeatother

    \sloppy 


\makeatletter
\@namedef{subjclassname@2020}{\textup{2020} Mathematics Subject Classification}
\makeatother


\title[Convex cocompact subgroups with limit set a Pontryagin sphere]{Convex cocompact groups in real hyperbolic spaces with limit set a Pontryagin sphere}
\author{Sami Douba}
\address{Mathematisches Institut der Universit\"at Bonn, Endenicher Allee 60, 53115 Bonn, Germany}
\email{douba@math.uni-bonn.de}

\author{Gye-Seon Lee}
\address{Department of Mathematical Sciences and Research Institute of Mathematics, Seoul National University, Seoul 08826, South Korea}
\email{gyeseonlee@snu.ac.kr}

\author{Ludovic Marquis}
\address{Univ Rennes, CNRS, IRMAR - UMR 6625, F-35000 Rennes, France}
\email{ludovic.marquis@univ-rennes.fr}

\author{Lorenzo Ruffoni}
\address{Department of Mathematics and Statistics - Binghamton University, Binghamton, NY 13902, USA}
\email{lorenzo.ruffoni2@gmail.com}

\begin{document}

\date{\today}
\subjclass[2020]{20F65, 20F67, 22E40 (primary); 20F55 (secondary)}

\begin{abstract}
We exhibit two examples of convex cocompact subgroups of the isometry groups of real hyperbolic spaces with limit set a Pontryagin sphere: one generated by $50$ reflections of $\mathbb{H}^4$, and the other by a rotation of order $21$ and a reflection of $\mathbb{H}^6$. For each of them, we also locate convex cocompact subgroups  with limit set a Menger curve.
\end{abstract}

\keywords{reflection groups, convex cocompact subgroups, real hyperbolic spaces, limit sets, Pontryagin sphere, Menger curve, right-angled Coxeter groups}

\maketitle

\section{Introduction}

For any Gromov hyperbolic group $G$, the Gromov boundary $\partial_\infty G$ of $G$ is a compact metrizable space \cite{G87}. A natural problem is to characterize which spaces can arise as boundaries of Gromov hyperbolic groups (see \cite[Question A]{KK00}). 
When $\partial_\infty G$ is $1$-dimensional, if $G$ does not split over virtually cyclic subgroups, then $\partial_\infty G$ can be either $S^1$, the Sierpi\'nski carpet, or the Menger curve (see \cite[Theorem 4]{KK00}). Moreover, the Menger curve is the generic case, see \cite{CH95}.
In dimensions at least $2$, no such characterization is available. 
Higher-dimensional spheres and Sierpi\'nski compacta are realized by the fundamental group of real hyperbolic manifolds, respectively closed or with totally geodesic boundary. Other spaces known to be realized include certain Menger compacta, the Pontryagin surfaces, and many trees of manifolds, including the Pontryagin sphere (see \cite{DR99,FI03,DO07,PS09,SW20}).

Another interesting question concerns which Gromov hyperbolic groups can be realized as discrete subgroups $\Gamma < \mathrm{Isom} (\hh^n)$ for some $n$, or which spaces can occur as the limit set of such groups $\Gamma$. The limit set $\Lambda_{\Gamma}$ of $\Gamma$ is a fundamental object in the study of Kleinian groups, and there is a rich theory relating the Hausdorff dimension of $\Lambda_{\Gamma}$ to the critical exponent of $\Gamma$, see \cite{KAP09}. In particular, when $\Gamma$ is {\em convex cocompact}, these invariants  coincide, see \cite{SU79}. Moreover, in this case, the abstract group $\Gamma$ is Gromov hyperbolic, and $\Lambda_\Gamma$ is $\Gamma$-equivariantly homeomorphic to $\partial_\infty \Gamma$. 
However, very few topological spaces are currently known to occur as limit sets. For instance, it remains an open question whether there exists a convex cocompact subgroup $\Gamma < \mathrm{Isom} (\hh^n)$ whose limit set is a \v Cech cohomology sphere other than the standard sphere (see \cite[Question 9.5]{Kapovich08} and \cite{BM91}.)

We now turn to a more detailed discussion of the main objectives of this paper.
 The  \textit{Sierpi\'nski carpet} is the fractal curve obtained from a square by removing smaller and smaller squares, similarly to  the way one obtains the Cantor set  from an interval.
In particular, the Sierpi\'nski carpet  has a well-defined collection of boundary squares.
The \textit{Pontryagin sphere} is obtained from the Sierpi\'nski carpet  by identifying the opposite sides of each boundary square as one would glue the edges of a square to form a torus.
This results in a $2$-dimensional compact metrizable space without non-empty open subsets embeddable in  $\rr^2$; see Figure~\ref{fig:pontryagin}.
The Pontryagin sphere can also be described as the  limit of a certain inverse system of closed connected  orientable surfaces of positive genus; see \cite{JA91,FI03,ZA10,SW20,SW20sp} and \S\ref{sec:pontryagin}. 

The Pontryagin sphere is known to appear as the Gromov boundary of many Gromov hyperbolic groups, see for instance \cite[Remark 4.4]{PS09} and \cite{JS06,ZA10}, and also more recent work \cite{FGLS25, CDSS25} where infinitely many quasi-isometry classes of such groups are constructed.
Here we consider the problem of realizing the Pontryagin sphere as the limit set of concrete reflection groups in the isometry group $\mathrm{Isom} (\hh^d)$ of a real hyperbolic space $\hh^d$.

\begin{theorem}\label{main4}
There is a convex cocompact right-angled reflection subgroup $\Gamma^4$ of $\mathrm{Isom} (\hh^4)$   whose limit set is homeomorphic to the Pontryagin sphere.
\end{theorem}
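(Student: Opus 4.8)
The plan is to obtain $\Gamma^4$ as a standard reflection subgroup of the Coxeter group $W$ generated by the reflections in the facets of the compact right-angled $120$-cell $P \subset \hh^4$. Recall that $W$ is a cocompact lattice in $\mathrm{Isom}(\hh^4)$, hence Gromov hyperbolic, and that its nerve $N$ --- the boundary complex of the $600$-cell $\six$, a triangulation of $S^3$ with $120$ vertices --- is flag and contains no empty square, by Moussong's criterion.

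The combinatorial heart of the proof is to exhibit a set $T$ of $50$ vertices of $N$ whose induced (full) subcomplex $L := N[T]$ is a flag triangulation of a closed orientable surface of positive genus; in the body of the paper this surface will be assembled from two pieces $\tbtm$ and $\ttop$ glued along an interface $\interface$. Being a full subcomplex of the flag, empty-square-free complex $N$, $L$ is itself flag with no empty square, so $W_L$ is Gromov hyperbolic. Concretely, one must verify: (i) no facet of $\six$ (a tetrahedron of $N$) has all four of its vertices in $T$, so that $L$ is $2$-dimensional; (ii) $L$ is connected and the link in $L$ of every vertex is a single circle, so that $L$ is a closed surface; (iii) $L$ is orientable with $\chi(L) \leq 0$. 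This is the step I expect to be the main obstacle: it requires a sufficiently explicit, and presumably partly computer-assisted, grip on the combinatorics of the $600$-cell --- using its large symmetry group --- to produce such a $T$ and certify (i)--(iii).

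Granting such a set $T$, I would set $\Gamma^4 := \langle r_t : t \in T \rangle \leq W$, the subgroup generated by the $50$ reflections in the walls of $P$ indexed by $T$; abstractly $\Gamma^4 \cong W_L$. Since $P$ is compact, any two of these $50$ walls are either orthogonal (when the corresponding vertices span an edge of $L$) or strictly ultraparallel (otherwise), so $\Gamma^4$ is a right-angled reflection subgroup of $\mathrm{Isom}(\hh^4)$. As a standard parabolic subgroup of the Coxeter group $W$, it is quasi-convex in $W$ (geodesics between its elements stay in it), and a quasi-convex subgroup of a cocompact lattice of $\mathrm{Isom}(\hh^4)$ is convex cocompact. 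Hence $\Gamma^4$ is a convex cocompact right-angled reflection subgroup of $\mathrm{Isom}(\hh^4)$, and, as recalled in the introduction, its limit set $\Lambda_{\Gamma^4}$ is $\Gamma^4$-equivariantly homeomorphic to $\grbd \Gamma^4 = \grbd W_L$.

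It remains to identify $\grbd W_L$. Since $L$ is a flag triangulation of a closed orientable surface of positive genus and $W_L$ is Gromov hyperbolic, the description of Gromov boundaries of right-angled Coxeter groups with manifold nerves (\cite{FI03}; see also \cite{JA91, SW20}) presents $\grbd W_L$ as an inverse limit of closed orientable surfaces of positive genus, that is, as the Pontryagin sphere (see \S\ref{sec:pontryagin}). Therefore $\Lambda_{\Gamma^4}$ is homeomorphic to the Pontryagin sphere, which is the assertion of the theorem.
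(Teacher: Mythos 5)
Your framework is exactly the paper's: take the reflection group $W_{\six}$ of the compact right-angled $120$-cell $\mathcal{P}_{120}\subset\hh^4$, whose nerve is the $600$-cell $\six$; find a full subcomplex $L\subseteq\six$ on $50$ vertices that is a flag-no-square triangulation of a closed orientable surface of positive genus; take the special subgroup $\Gamma^4=W_L$; use quasiconvexity of special subgroups plus convex cocompactness of quasiconvex subgroups of the cocompact lattice to identify the limit set with $\grbd W_L$; and invoke Fischer's theorem (\cite{FI03}, Lemma~\ref{lem:pontryagin}) to conclude that this boundary is the Pontryagin sphere. All of that reduction is correct and matches the paper.

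The genuine gap is that you never produce the set $T$: you explicitly defer the ``combinatorial heart'' of the argument, and that heart is precisely what the paper's Section~3 supplies. The paper uses the classical decomposition of $\six$ into two solid tori $\tbtm$, $\ttop$ (each a stack of ten ``flying saucers'' plus interstitial tetrahedra, $150$ tetrahedra apiece) separated by an interface $\interface\cong T^2\times[0,1]$, and takes $L=\partial\tbtm$, a torus triangulated by $100$ triangles, $150$ edges, and $50$ vertices in which every vertex has exactly $6$ neighbors. Note that the surface is the boundary of \emph{one} solid torus, not something assembled from both pieces as you speculate. Your conditions (i)--(iii) are then verified not by a computer search but by two short observations: the triangulation of $\partial\tbtm$ is a quotient of an equilateral triangulation of a parallelogram whose sides have combinatorial length greater than $4$, hence flag-no-square; and fullness follows from the explicit census of the $12$ neighbors of a vertex $v\in\partial\tbtm$ in $\six$ ($2$ interior to $\tbtm$, $6$ on $\partial\tbtm$ with the connecting edges already in $\partial\tbtm$, and $4$ on $\partial\ttop$), which shows the $1$-skeleton of $\partial\tbtm$ is induced. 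Without exhibiting $T$ and certifying these properties, the proof is incomplete; with the solid-torus decomposition in hand, the rest of your argument goes through as written.
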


\begin{theorem}\label{main6}
There is a 2-generated convex cocompact subgroup $\Gamma_2^6$ of $\mathrm{Isom} (\hh^6)$ whose limit set is homeomorphic to the Pontryagin sphere. Moreover, $\Gamma_2^6$ contains a normal subgroup $\Gamma^6$ of index $21$ which is a right-angled reflection group.
\end{theorem}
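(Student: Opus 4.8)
\emph{Strategy.} The plan is to produce a right-angled Coxeter group $W$ that (a) has Gromov boundary homeomorphic to the Pontryagin sphere, (b) is realized by a finite-sided right-angled polytope $P\subset\hh^6$ whose reflection group $\Gamma^6\cong W$ is convex cocompact, and (c) carries an order-$21$ cyclic symmetry of its defining involutions which is realized by an isometry of $\hh^6$. Given this, set $\Gamma_2^6:=\langle\Gamma^6,\rho\rangle$, where $\rho$ is the realizing isometry: the $\rho$-orbit of one defining reflection recovers all of them, so $\Gamma_2^6$ is $2$-generated, while $\Gamma^6$ is normal of index $21$ in $\Gamma_2^6$. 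By \S\ref{sec:pontryagin} (see also \cite{JA91,FI03,SW20}), requirement (a) holds as soon as $W=W_L$ for $L$ a flag triangulation of a closed orientable surface of positive genus containing no empty square---that is, no full subcomplex equal to a $4$-cycle---the last condition being exactly Moussong's hyperbolicity criterion for $W_L$ in the right-angled case.

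\emph{The combinatorial and arithmetic input.} To obtain the cyclic symmetry, take $L=\T$, a flag triangulation of the torus whose vertex set is $\zz/21$, whose $1$-skeleton is a $6$-regular circulant graph, and on which $\zz/21$ acts by translations, simply transitively on the vertices; by Euler characteristic $\T$ has $63$ edges and $42$ triangles, and it is a normal $\zz/21$-cover of a one-vertex triangulation of the torus. One then looks for a Gram matrix of $\Gamma^6$: a symmetric $21\times 21$ circulant $G$ with $G_{ii}=1$, with $G_{ij}=0$ whenever $\{i,j\}$ is an edge of $\T$, with $G_{ij}<-1$ otherwise, and of signature $(6,1)$ (six positive, one negative, and $14$ zero eigenvalues). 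Since $G$ is circulant, its eigenvalues are explicit sums of $\zz/21$-characters, so the signature and the inequalities together form a concrete finite system in the circulant parameters; once it is solved, Vinberg's construction of hyperbolic reflection groups produces a finite-sided right-angled polytope $P\subset\hh^6$ with $21$ facets, pairwise either orthogonal (along the edges of $\T$) or ultraparallel, whose facet reflections $r_1,\dots,r_{21}$ generate a discrete group $\Gamma^6\cong W_\T$.

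\emph{Geometry and the extension.} Because non-adjacent mirrors of $P$ are ultraparallel, every principal submatrix of $G$ containing such a pair is indefinite, so no collection of facets of $P$ meets at infinity: $P$ has no ideal vertices. As $P$ is finite-sided with no ideal vertices, $\Gamma^6$ has no parabolic subgroups, so it is convex cocompact; and $W_\T$ is Gromov hyperbolic, since an empty square in $\T$ would contribute to $G$ a principal submatrix of signature $(2,2)$, incompatible with signature $(6,1)$. Hence the limit set of $\Gamma^6$ is $\Gamma^6$-equivariantly homeomorphic to $\partial_\infty W_\T$, which by \S\ref{sec:pontryagin} is the Pontryagin sphere. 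The $\zz/21$-symmetry of $G$, which permutes the facet normals by a $21$-cycle $\sigma$, is realized by an isometry $\rho\in\mathrm{Isom}(\hh^6)$ with $\rho(P)=P$ and $\rho r_i\rho^{-1}=r_{\sigma(i)}$; as $\rho^{21}$ fixes the spanning set of normals, $\rho$ has order exactly $21$ and is elliptic, i.e.\ a rotation of order $21$. Since $\rho$ preserves the chamber $P$ and $\Gamma^6$ acts simply transitively on chambers, $\langle\rho\rangle\cap\Gamma^6=\{1\}$, so $\Gamma_2^6:=\langle\Gamma^6,\rho\rangle=\Gamma^6\rtimes\langle\rho\rangle$ contains $\Gamma^6$ as a normal subgroup of index $21$; being a finite-index overgroup of a convex cocompact group, $\Gamma_2^6$ is convex cocompact with the same limit set, the Pontryagin sphere. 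Finally, $\rho^k r_1\rho^{-k}=r_{\sigma^k(1)}$ runs through all of $r_1,\dots,r_{21}$ as $k$ varies over $\zz/21$, so $\langle\rho,r_1\rangle\supseteq\langle r_1,\dots,r_{21},\rho\rangle=\Gamma_2^6$; thus $\Gamma_2^6$ is generated by the rotation $\rho$ and the reflection $r_1$, completing the proof.

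\emph{Main obstacle.} The entire difficulty lies in the first step: one must exhibit a $21$-vertex circulant triangulation $\T$ of the torus that is simultaneously flag and free of empty squares \emph{and} whose associated circulant Gram matrix can be tuned to signature exactly $(6,1)$ with every off-edge entry strictly below $-1$. These constraints pull against one another---the flag and no-square conditions want the defining differences spread out, while forcing signature $(6,1)$ requires fourteen eigenvalues of a circulant to vanish---and it is precisely their simultaneous satisfiability that pins the ambient dimension down to $6$; this is where an explicit combinatorial model for $\T$, built from the tori $\tbtm$, $\ttop$ and the interface $\interface$, carries the load. Once such a model is in hand, that $P$ is finite-sided with no ideal vertices and that $\rho$ exists as a genuine isometry (not merely a formal symmetry of $G$) are routine consequences of Vinberg's theory.
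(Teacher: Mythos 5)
Your strategy is the same as the paper's: realize $W_{\T}$ (the RACG on a $21$-vertex circulant flag-no-square torus triangulation) as a right-angled reflection group $\Gamma^6=\mathrm{Refl}(\mathcal{P}_{21})$ in $\hh^6$ via Vinberg's theorem applied to a circulant Gram matrix of signature $(6,1,14)$, then adjoin an order-$21$ isometry $\sigma$ permuting the facet normals and take $\Gamma_2^6=\langle \sigma, s_1\rangle$. Your argument that $\langle\rho\rangle\cap\Gamma^6=\{1\}$ via simple transitivity of $\Gamma^6$ on its chambers is a legitimate alternative to the paper's argument (which instead observes that torsion in a RACG has order a power of $2$, so $\sigma,\sigma^3,\sigma^7$ cannot lie in $\Gamma^6$), and your observation that signature $(6,1)$ already rules out empty squares by eigenvalue interlacing is a nice shortcut to hyperbolicity.

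However, there is a genuine gap: the entire existence claim rests on solving the ``concrete finite system in the circulant parameters,'' and you never solve it --- you explicitly defer it as the ``main obstacle'' and do not exhibit a single Gram matrix of rank $7$ and signature $(6,1,14)$ with the required off-diagonal entries of absolute value greater than $1$. This is exactly the content that the paper supplies: the explicit algebraic values $u=\tfrac{1}{50}(27+7\sqrt{21})$, $v=\tfrac{1}{50}(21+11\sqrt{21})$, $w=\tfrac{1}{50}(49+9\sqrt{21})$, found by imposing vanishing of sampled $8\times 8$ minors and verified by exact computation in SageMath, together with the explicit $\sigma\in\mathrm{SO}(6,1)$ built from rotation blocks $R_1,R_4,R_5$ and the explicit space-like vector $y$ whose $\sigma$-orbit realizes that Gram matrix. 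Without producing (or at least proving the existence of) such a solution, the proof is incomplete at its central step. A secondary confusion: you say the combinatorial model for $\T$ is ``built from the tori $\tbtm$, $\ttop$ and the interface $\interface$''; that decomposition belongs to the $600$-cell and the $\hh^4$ construction of Theorem~\ref{main4}, not to $\T$, which is simply the flag complex on the circulant graph with connection set $\{\pm1,\pm4,\pm5\}$ modulo $21$.
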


The starting point of our work is the following. If $W$ is a Gromov hyperbolic right-angled Coxeter group (RACG) whose nerve is a closed connected orientable surface of positive genus then the Gromov boundary $\partial_\infty W$ of $W$ is a Pontryagin sphere; see \cite{FI03}. 
Hence, the upshot of this paper is to find convex cocompact actions of two such Coxeter groups on $\hh^4$ and $\hh^6$, respectively.

The groups constructed in this paper are optimal in the following sense.
The group in dimension $d=4$ appears in the smallest possible dimension, as the Pontryagin sphere does not embed in $\Sb^2=\partial_\infty \hh^3$.
In particular, $\Gamma^4$ is Zariski dense in  $\mathrm{Isom} (\hh^4)$.
On the other hand, the group $\Gamma_2^6$ in dimension $d=6$ has the smallest possible number of generators (as the limit set of a cyclic group consists of at most $2$ points).
Moreover, the groups in dimension $d=6$ are Zariski dense, so their limit set is not contained in any round $\Sb^k\subseteq \Sb^5=\partial_\infty \hh^6$ for $k\leq 4$; see Remark~\ref{rem:zariski_dense}.

The \textit{Menger curve} is a  fractal curve that is obtained from a cube by removing smaller and smaller cubes, similarly to the way one obtains the  Sierpi\'nski carpet from a square and  the Cantor set from an interval.
It can be characterized as the unique compact, metrizable, connected, locally connected, $1$-dimensional space without local cut points and without non-empty open subsets embeddable in $\rr^2$; see \cite{AN58,AN58b}.
According to \cite{KP96}, the Pontryagin sphere has a dense collection of Menger curves, so it is natural to look for Menger curves that are limit sets of quasiconvex subgroups of $\Gamma^4$ or $\Gamma^6$.
In Theorem~\ref{thm:menger} we  show that both $\Gamma^4$ and $\Gamma^6$ have convex cocompact subgroups with limit set homeomorphic to the Menger curve. 
These subgroups are obtained by forgetting one (or more, suitably chosen) reflections. 
Previous examples of convex cocompact  subgroups of $\mathrm{Isom}(\hh^4)$ with limit set a Menger curve were constructed by Bourdon \cite{BO97}, see Remark~\ref{remark:bourdon}.

We remark that indeed the answer to the following question does not appear to be known.

\begin{question}
Does every Gromov hyperbolic group with boundary a Pontryagin sphere possess a quasiconvex subgroup with boundary a Menger curve?
\end{question}

Note that the analogous statement for Gromov hyperbolic groups with boundary a $2$-sphere, namely, that every group of the latter form possesses a quasiconvex subgroup with boundary a Sierpi\'nski carpet, would follow from the Cannon conjecture, but even this implication requires non-trivial results about hyperbolic $3$-manifolds, including several of the ingredients that went into the solution to the virtual Haken conjecture~\cite{Ber15}.

\begin{figure}[ht]
    \centering
    \includegraphics[scale=.8]{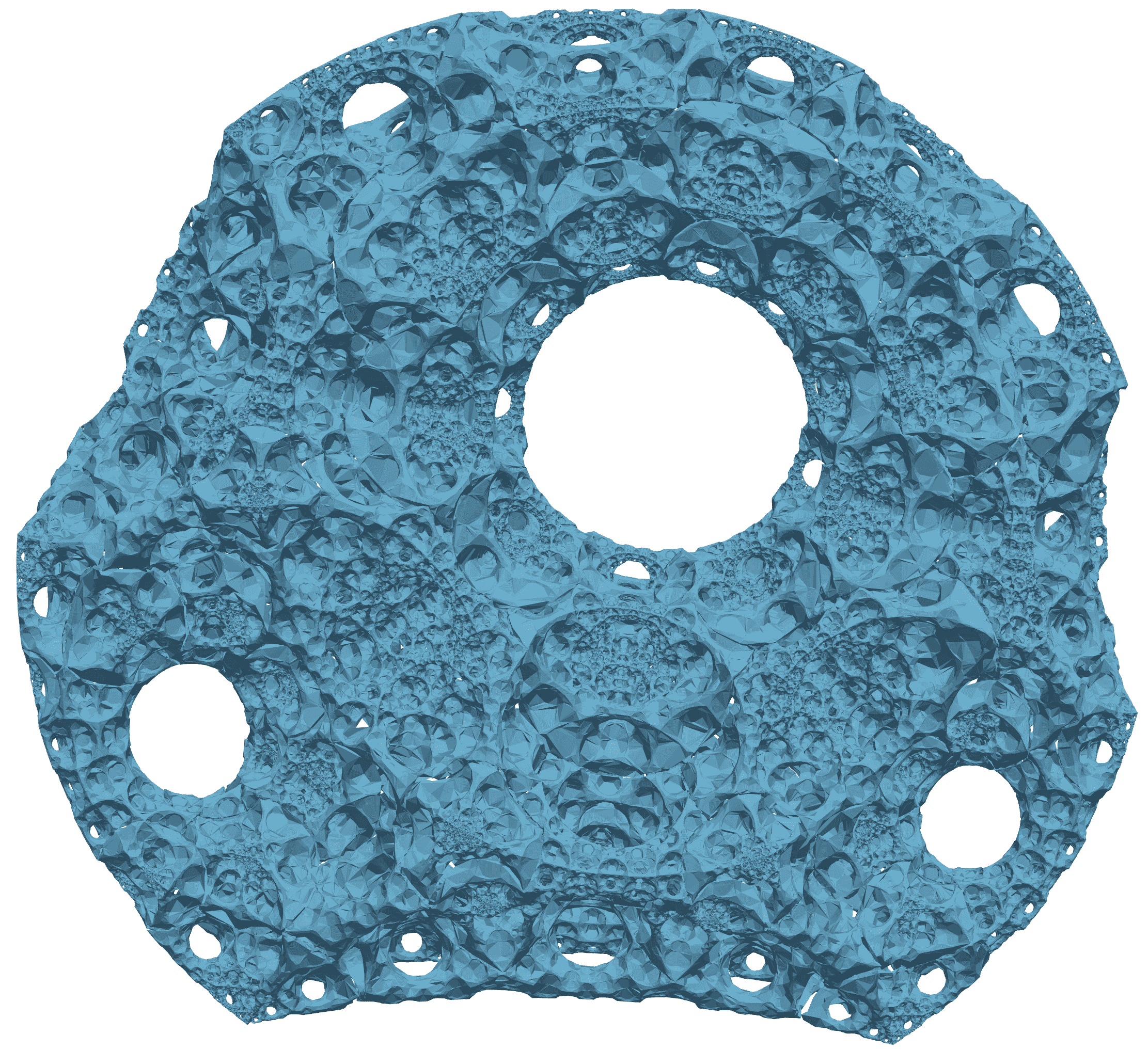}
    \caption{A rendering of the Pontryagin sphere, courtesy of Theodore Weisman. This is in fact a plot of the limit set of the group $\Gamma^4$ in Theorem~\ref{main4}.}
    \label{fig:pontryagin}
\end{figure}


We conclude with a few comments on realizing the Pontryagin sphere as a limit set in other spaces.
By embedding $\hh^4$ as a totally geodesic subspace in higher-dimensional hyperbolic spaces, one can realize $\Gamma^4$ as a convex cocompact subgroup of  $\mathrm{Isom}(\hh^d)$ for each $d\geq 4$. 
Similarly, since  $\hh^4$ embeds as a totally geodesic subspace in the complex hyperbolic spaces $\hh^d_\mathbb{C}$ for $d\geq 4$, in all quaternionic hyperbolic spaces, and in the octonionic hyperbolic plane, all these symmetric spaces have convex cocompact groups of isometries with limit set a Pontryagin sphere.
This leaves us with $\hh_\cc^2$ and $\hh_\cc^3$.
Jeffrey Danciger and Theodore Weisman
have communicated to us that they can show that the deformations of certain cusped hyperbolic $3$-orbifold groups inside $\mathrm{Isom}(\hh_\cc^3)$ constructed in \cite{PT23} descend to convex cocompact representations of suitable quotients with limit set a Pontryagin sphere.
On the other hand, it is known for instance that the limit set of a convex cocompact subgroup of $\mathrm{Isom}(\hh_\cc^2)$ cannot have topological dimension $2$, see \cite[Theorem 17]{KAP22}. In particular, the limit set of such a group cannot be a Pontryagin sphere.
To summarize, there is a complete list of the negatively curved symmetric spaces whose isometry group does not contain a convex cocompact subgroup with limit set a Pontryagin sphere; these are precisely $\hh^2$, $\hh^3$, and $\hh_\cc^2$.
We also note that Granier \cite{Granier} constructed convex cocompact subgroups of $\mathrm{Isom}(\hh_\cc^2)$ with limit set a Menger curve (see also recent related work of Ma--Xie \cite{MaXie}).
Outside of the world of symmetric spaces, one can use Ontaneda's Riemannian hyperbolization  \cite{ON20} to construct many pinched Hadamard manifolds in any dimension $d\geq 4$, whose isometry groups admit convex cocompact subgroups with limit set a Pontryagin sphere.
We also remark that there exist Gromov hyperbolic groups with Gromov boundary a Pontryagin sphere, which do not admit any proper action on a real or complex hyperbolic space \cite{MR26}.

\subsection*{Acknowledgements} 
We thank Thierry Barbot, Nikolay Bogachev, Bena Tshishiku, Genevieve Walsh, and Theodore Weisman for useful conversations. We thank Ara Bamsajian, Jiming Ma, and Subhadip Dey for pointing us to relevant references. We also thank Theodore Weisman for providing the beautiful Figure~\ref{fig:pontryagin}.
S.D. thanks Fran\c{c}ois Gu\'eritaud for the content of Remark~\ref{remark:bourdon}. Finally, we would like to thank the referee for carefully reading the paper and for suggesting several valuable improvements.

S.D. was supported by the Huawei Young Talents Program.  G.L. was supported by the European Research Council under ERC-Consolidator Grant 614733 and by the National Research Foundation of Korea (NRF) grant funded by the Korean government (MSIT) (No 2020R1C1C1A01013667). L.M. acknowledges support by the Centre Henri Lebesgue (ANR-11-LABX-0020 LEBESGUE),  ANR G\'eom\'etries de Hilbert sur tout corps valu\'e (ANR-23-CE40-0012) and  ANR Groupes Op\'erant sur des FRactales (ANR-22-CE40-0004). L.R. acknowledges support by INDAM-GNSAGA.


\section{Preliminaries}

\subsection{Simplicial complexes}
Let $K$ be a  simplicial complex. 
In this paper $K$ will always be finite.
We will use the following terminology and notation:
\begin{itemize}
    \item The $d$-\textit{skeleton} of $K$ is denoted $K^{(d)}$.

    \item $K$ is \textit{flag} if any collection of $d+1$ pairwise adjacent vertices spans a $d$-simplex.
    If $K$ is any simplicial complex, then the \textit{flag complex} on $K$ is the complex obtained by adding all simplices whose $1$-skeleton appears in $K$.
    In particular, a flag complex is completely determined by its $1$-skeleton.

    \item If $L\subseteq K$ is a subcomplex, we say that $L$ is \textit{full} (or \textit{induced}) if 
    whenever $d+1$ vertices of $L$ span a $d$-simplex in $K$, they also span a $d$-simplex in $L$.
    In particular, note that a full subcomplex of a flag complex is flag, and that if $H$ is a full subcomplex of $L$ and $L$ is a full subcomplex of $K$, then $H$ is a full subcomplex of $K$.
    
    \item $K$ is \textit{flag-no-square} if it is flag and has no induced squares (i.e., every square has a chord).
    
    \item The \textit{puncture-respecting cohomological dimension} of $K$ is defined to be: 
$$\pcd(K)=\max_{\sigma} \,\, \max \left\lbrace n \, \left | \,\,   \overline H^n(K\setminus \sigma) \neq 0 \right . \right\rbrace,$$
where $\overline H^n$ denotes reduced cohomology and $\sigma$ is a (possibly empty) simplex of $K$; see \cite{DS21}.
        
\end{itemize}

\subsection{The Pontryagin sphere}\label{sec:pontryagin}
The \textit{Pontryagin sphere} is a $2$-dimensional continuum (i.e., connected compact metrizable space), which arises as the limit of a certain inverse system of closed surfaces of positive genus, see Figure~\ref{fig:pontryagin}. 
For more details and background about the material presented in this section, we refer the reader to \cite{JA91,FI03,ZA10,SW20,SW20sp}.
The name ``sphere'' can be misleading here. 
Indeed, not only is this space not a manifold, but also none of its nonempty open subsets can be embedded in  $\rr^2$. 
Note that Pontryagin also defined an infinite family of $2$-dimensional continua called \textit{Pontryagin surfaces}, which are also obtained as inverse limits of $2$-dimensional complexes but are not manifolds. 
However, the Pontryagin sphere is not one of the Pontryagin surfaces: while the former embeds in $\rr^3$ and has rational cohomological dimension $2$, the latter do not, see \cite[Example 1.9]{DR05}.

The notion of dimension we consider in this paper is the \textit{Lebesgue covering dimension} (or \textit{topological dimension}), i.e., the minimum integer $d$ for which every open cover has an open refinement such that every point is contained in at most $d+1$ open sets.
This is a well defined topological invariant for all metric spaces, and agrees with the standard notion of dimension for manifolds and CW-complexes.

Here are some more details about the construction of the Pontryagin sphere.
Let $\{f_{n}:S_n\to S_{n-1}\}_{n\in \nn}$ be an inverse system of closed connected  orientable surfaces and continuous maps between them.
Its inverse limit is called a \textit{tree of surfaces} if the bonding maps $f_n$ satisfy some technical conditions, which roughly speaking say that $S_n$ is obtained from $S_{n-1}$ by performing a connected sum with some closed orientable surfaces, in such a way that the portion of $S_{n-1}$ which is engaged in the connected sum becomes dense eventually.
The reader should imagine starting with a surface $S_0$ and performing an infinite sequence of connected sums with surfaces along smaller and smaller disks that become more and more densely packed.
If all surfaces involved are $2$-spheres, then the limit is a $2$-sphere. 
If finitely many surfaces have positive genus, then the limit is a closed surface of positive genus. 
If infinitely many surfaces have positive genus, then the limit is the Pontryagin sphere. Heuristically, this space feels like a $2$-sphere with infinitely many handles attached, but since we are attaching handles all over the place, the result is a fractal space rather than a surface of infinite type, see Figure~\ref{fig:pontryagin}.

In our context, an inverse system as above arises as follows.
Let $X$ be a CAT(0) cube complex in which the link of every vertex is a flag triangulation of a closed connected  orientable surface of positive genus.
Choose a basepoint $p$ in the interior of a cube and consider a family of closed metric balls $B(p,r_n)$ of radius $r_n>0$ centered at $p$. 
Assume for simplicity that $p$ and $r_n$ are chosen generically, so that no ball contains a vertex on its boundary. 
If $B(p,r_n)$ does not contain any vertex, then $\partial B(p,r_n)$ is just the space of directions at $p$ and is homeomorphic to a $2$-sphere. 
More generally, as observed in \cite[\S 3.d]{DJ91},  $\partial B(p,r_{n})$ is homeomorphic to the surface obtained from $\partial B(p,r_{n-1})$ by taking a connected sum with the surfaces $\lk{v}{X}$, where $v$ ranges over the  vertices contained in $B(p,r_{n})\setminus B(p,r_{n-1})$. 
We obtain an inverse system of surfaces, in which the bonding maps are induced by the nearest-point projections $B(p,r_{n})\to B(p,r_{n-1})$.
Note that the collection of geodesic rays issuing from the base point $p$ and hitting a vertex is dense in the space of directions at $p$, so the connected sums involved in the inverse limit engage a dense portion of the space of directions at $p$.
The \textit{visual boundary} $\partial_\infty X$ of $X$ is the limit of this inverse system, and is homeomorphic to the Pontryagin sphere.

\subsection{Right-angled Coxeter groups}
Given a flag complex $K$, the \textit{right-angled Coxeter group} (RACG) defined by $K$ is the group $W_K$ generated by one involution for each vertex of $K$, with two generators commuting exactly when the corresponding vertices are adjacent. In other words, it is defined by the following presentation
$$
W_K=\langle s\in K^{(0)} \mid s^2=1, \; [s,t]=1 \textrm{ for all edges } (s,t) \textrm{ of }  K \rangle.
$$

The RACGs considered in this paper arise  concretely as groups generated by reflections in the codimension-$1$ faces of certain polytopes in real hyperbolic $d$-space $\hh^d$. 
More generally, a RACG $W_K$ can be regarded as a group generated by reflections in the hyperplanes of the associated \textit{Davis complex}. 
This is a CAT(0) cube complex whose $1$-skeleton may be identified with the Cayley graph of $W_K$ and where the link of each vertex is isomorphic to the flag complex $K$.
For background on Coxeter groups the reader is referred to  \cite{DA08}.

The flag complex $K$ is called the \textit{nerve} of the group $W_K$. 
Note that $W_K$ is completely determined by the $1$-skeleton of $K$. 
So, if $\mathscr{G}$ is a finite simplicial graph, we also use the notation $W_{\mathscr{G}}$ for the RACG $W_K$, where $K$ is the flag complex on $\mathscr{G}$.

We now collect some well-known properties of $W_K$ that can be read off of its nerve for future reference.
Given a full subcomplex  $L\subseteq K$, the subgroup  $\langle L^{(0)} \rangle$ generated by the vertices of $L$ is called the \textit{special subgroup} defined by $L$.
\begin{lemma}\label{lem:racg}
    Let $K$ be a flag complex. Then the following hold.
    \begin{enumerate}
    \item \label{item:racg special} If $L\subseteq K$ is a  full subcomplex, then $\langle L^{(0)}\rangle$ is naturally isomorphic to $W_L$ and is quasiconvex (with respect to the generating set $K^{(0)}$ for $W_K$).
    
    \item \label{item:racg hyp} $W_K$ is Gromov hyperbolic if and only if $K$ is flag-no-square.

    \item \label{item:racg dim} If $K$ is flag-no-square, then  $\dim(\grbd W_K)=\pcd(K)$, where $\grbd W_K$ is the Gromov boundary of $W_K$.
    \end{enumerate}
\end{lemma}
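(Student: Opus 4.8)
The plan is to treat the three items separately, reducing each to the classical structure theory of Coxeter groups (as in \cite{DA08}) together with two external inputs: Moussong's hyperbolicity criterion for the second item, and the Bestvina--Mess theorem relating boundary dimension to cohomological dimension for the third. None of the three assertions is new, so the work is chiefly to line up hypotheses and cite the right results.

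For the first item, I would set $T = L^{(0)}$ and invoke the standard description of special (parabolic) subgroups: in any Coxeter system $(W_K, K^{(0)})$ the subgroup $\langle T\rangle$ is again a Coxeter group on $T$, with Coxeter matrix the restriction of that of $W_K$; see \cite[Theorem~4.1.6]{DA08}. Fullness of $L$ guarantees that two of its vertices are adjacent in $L$ if and only if they are adjacent in $K$, so this restricted matrix is exactly the defining matrix of $W_L$, and the tautological epimorphism $W_L \twoheadrightarrow \langle T\rangle$ is an isomorphism. For quasiconvexity I would use that special subgroups are combinatorially convex in the Cayley graph of $(W_K, K^{(0)})$: by Tits's solution of the word problem, every reduced word for an element of $\langle T\rangle$ involves only letters from $T$, so a geodesic joining two elements of $\langle T\rangle$ stays in $\langle T\rangle$, and a convex subset is in particular quasiconvex.

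For the second item, the easy direction is to observe that an induced square on a $4$-cycle $a\,b\,c\,d$ (with $\{a,c\}$ and $\{b,d\}$ the non-adjacent pairs) yields, via the first item, a quasiconvex subgroup $\langle a,b,c,d\rangle \cong \langle a,c\rangle \times \langle b,d\rangle \cong D_\infty \times D_\infty$, which contains $\zz^2$; since quasiconvex subgroups of a Gromov hyperbolic group are themselves hyperbolic, $W_K$ is then not hyperbolic. For the converse I would appeal to Moussong's criterion \cite[Theorem~12.6.1]{DA08}: for $K$ flag the Davis complex carries the CAT$(0)$ Moussong metric, and $W_K$ is Gromov hyperbolic precisely when $K$ has no induced $4$-cycle (the right-angled specialization of Moussong's general condition). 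The substantive content is entirely imported here --- Moussong's theorem is the main ingredient --- and I would simply cite it.

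For the third item, I would first use the second item to know $W_K$ is Gromov hyperbolic, then pass to a finite-index torsion-free subgroup $W' \leq W_K$ (Coxeter groups are virtually torsion-free; see \cite{DA08}), so that $\grbd W_K = \grbd W'$. By Bestvina--Mess \cite{BM91}, $\dim(\grbd W_K) = \operatorname{cd}_\zz(W') - 1 = \vcd(W_K) - 1$, so it remains to show $\vcd(W_K) = \pcd(K) + 1$. This last equality is obtained by computing the compactly supported cohomology of the Davis complex via its decomposition into chambers, which produces a direct sum over the simplices $\sigma$ of $K$ (with $\sigma = \varnothing$ allowed) of degree-shifted copies of $\overline H^{\ast}(K \setminus \sigma)$, whence the top nonvanishing degree is $\pcd(K) + 1$; I would cite \cite{DS21} for this. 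I expect the third item to be the only genuine obstacle, and even there the difficulty is bookkeeping rather than conceptual: one must work with $\vcd$ rather than $\operatorname{cd}$ (infinite because of torsion), apply Bestvina--Mess only once the boundary is known to be finite-dimensional, and arrange the degree shift in the cohomology formula so that it matches the definition of $\pcd$ exactly --- all of which is packaged in \cite{DS21}.
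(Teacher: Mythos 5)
Your argument is correct and follows the same route as the paper, which simply cites the classical theory of special subgroups for \eqref{item:racg special}, Moussong's theorem for \eqref{item:racg hyp}, and the combination of Bestvina--Mess with Davis's computation of the compactly supported cohomology of the Davis complex for \eqref{item:racg dim}; your write-up merely unpacks those citations (including the standard $D_\infty\times D_\infty$ obstruction for the easy direction of \eqref{item:racg hyp} and the passage to a torsion-free finite-index subgroup in \eqref{item:racg dim}). No gaps.
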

\begin{proof}
The properties of special subgroups in \eqref{item:racg special} are classical, see e.g. \cite[\S 4.1]{DA08}. 
    Item \eqref{item:racg hyp} is the right-angled case of Moussong's theorem \cite{moussong}, see also \cite[Corollary 12.6.3]{DA08}.
    The equality in \eqref{item:racg dim} follows from  \cite[Corollary 1.4.(e)]{BM91} and \cite[Corollary 8.5.5]{DA08}.
\end{proof}

In many cases of interest, the topology of the nerve determines more than just the dimension of the Gromov boundary. 
For instance, we will use the following result from \cite{FI03}.

\begin{lemma}\label{lem:pontryagin}
 Let $K$ be a flag-no-square triangulation of a closed connected orientable surface of genus $g\geq 1$.   
 Then the Gromov boundary of $W_K$ is homeomorphic to the Pontryagin sphere.
\end{lemma}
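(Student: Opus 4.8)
The plan is to let $W_K$ act geometrically on its Davis complex $\Sigma_K$ and then appeal to the inverse-limit description of the visual boundary of a CAT(0) cube complex with closed orientable surface links recalled in \S\ref{sec:pontryagin}.

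First I would record the relevant features of $\Sigma_K$. Since $K$ is flag-no-square, Lemma~\ref{lem:racg}\eqref{item:racg hyp} gives that $W_K$ is Gromov hyperbolic, and $\Sigma_K$ is a CAT(0) cube complex on which $W_K$ acts properly and cocompactly by isometries, with the link of every vertex isomorphic to $K$. By hypothesis $K$ is a triangulation of a closed connected orientable surface of genus $g\geq 1$, so every vertex link of $\Sigma_K$ is a flag triangulation of a closed connected orientable surface of positive genus. Moreover $W_K$ is infinite (its nerve $K$, being a triangulated closed surface, is not a simplex), so $\Sigma_K$ is unbounded and an exhaustion of $\Sigma_K$ by metric balls $B(p,r_n)$ as in \S\ref{sec:pontryagin} engages infinitely many vertices.

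Next I would invoke \S\ref{sec:pontryagin}, which follows \cite{DJ91} and \cite{FI03}: for a CAT(0) cube complex all of whose vertex links are flag triangulations of closed connected orientable surfaces of positive genus, the spheres $\partial B(p,r_n)$ form a tree of surfaces whose bonding maps are induced by nearest-point projections, each step performs a connected sum with a copy of the surface underlying $K$, infinitely many such connected sums occur, and the resulting inverse limit --- which is the visual boundary $\partial_\infty \Sigma_K$ --- is homeomorphic to the Pontryagin sphere. Finally, $W_K$ with its word metric is quasi-isometric to $\Sigma_K$, so $\partial_\infty W_K$ is homeomorphic to the Gromov boundary of $\Sigma_K$, which coincides with the visual boundary $\partial_\infty \Sigma_K$ since $\Sigma_K$ is CAT(0) and Gromov hyperbolic; hence $\partial_\infty W_K$ is the Pontryagin sphere.

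The one genuinely substantive ingredient, which I would cite rather than reprove, is the claim from \S\ref{sec:pontryagin}/\cite{DJ91,FI03} that the inverse limit of this particular tower of surfaces is the Pontryagin sphere: this requires checking the technical ``tree of surfaces'' conditions --- principally that the regions engaged in the connected sums become dense in the space of directions at the basepoint, which holds because geodesic rays through vertices are dense there --- together with the topological recognition of the Pontryagin sphere as exactly such a limit. The remaining steps --- the structure of the Davis complex and the behaviour of Gromov boundaries under geometric actions --- are standard, so I do not expect a real obstacle beyond invoking this input correctly.
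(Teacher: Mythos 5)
Your argument is correct and is essentially the route the paper itself takes: the paper does not reprove this lemma but cites Fischer \cite{FI03}, and its \S\ref{sec:pontryagin} is precisely the sketch you give (Davis complex as a CAT(0) cube complex with vertex links isomorphic to $K$, exhaustion by balls yielding a tree of surfaces \`a la \cite{DJ91}, and the recognition of the inverse limit as the Pontryagin sphere). You correctly isolate the one substantive input --- the tree-of-surfaces recognition theorem --- as something to cite rather than reprove, which matches the paper's treatment.
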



 \subsection{Convex cocompactness}

Let $\Gamma$ be a discrete subgroup of $\mathrm{Isom} (\hh^d)$. 
The \textit{limit set} of $\Gamma$ is the closed subset of $\Sb^{d-1} = \partial_\infty \hh^d$ consisting of all accumulation points of the orbit under $\Gamma$ of any point of $\hh^d$ (this does not depend on the point).
We say that $\Gamma$ is \textit{convex cocompact} if $\Gamma$ acts cocompactly on the convex hull of its limit set.

\begin{lemma}\label{lem:convexccpt}
Let $\Gamma$ be a convex cocompact subgroup of $\mathrm{Isom} (\hh^d)$.     Then the following hold.
\begin{enumerate}
    \item \label{item:cc hyp} $\Gamma$ is Gromov hyperbolic, and the limit set of $\Gamma$ in $\Sb^{d-1}$ is homeomorphic to the Gromov boundary of~$\Gamma$.

    \item \label{item:cc sub} If $H\leq \Gamma$ is quasiconvex, then $H$ is convex cocompact.
\end{enumerate}    
\end{lemma}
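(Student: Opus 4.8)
The plan is to deduce both items from the fact that $\Gamma$ acts geometrically (properly discontinuously, cocompactly, by isometries) on the closed convex subset $C:=\mathrm{Conv}(\Lambda_\Gamma)\subseteq\hh^d$, which is a proper geodesic $\mathrm{CAT}(-1)$ space whose ideal boundary, computed inside $\Sb^{d-1}=\partial_\infty\hh^d$, is exactly $\Lambda_\Gamma$: since $C$ is convex, a geodesic ray of $\hh^d$ with endpoint in $\Lambda_\Gamma$ issuing from a point of $C$ stays in $C$, conversely every geodesic ray of $C$ has endpoint in $\Lambda_\Gamma$, and a point of $\Sb^{d-1}$ outside $\Lambda_\Gamma$ is separated from $\Lambda_\Gamma$, hence from $C$, by a geodesic hyperplane. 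For item \eqref{item:cc hyp}: the action of $\Gamma$ on $C$ is proper because $\Gamma$ is discrete in $\mathrm{Isom}(\hh^d)$ and cocompact by hypothesis, so the \v{S}varc--Milnor lemma shows $\Gamma$ is finitely generated and quasi-isometric to $C$; since $C$ is $\mathrm{CAT}(-1)$, hence $\delta$-hyperbolic, $\Gamma$ is Gromov hyperbolic. A $\Gamma$-equivariant quasi-isometry between proper geodesic hyperbolic spaces induces a $\Gamma$-equivariant homeomorphism of Gromov boundaries, so $\partial_\infty\Gamma$ is $\Gamma$-equivariantly homeomorphic to $\partial_\infty C=\Lambda_\Gamma$. (When $\Lambda_\Gamma$ has at most two points, $\Gamma$ is finite or virtually cyclic and the statement is immediate; note $\Gamma$ has no parabolic elements, since every infinite-order element acts as a semisimple, hence loxodromic, isometry of $\hh^d$ by positivity of its translation length on $C$, so $|\Lambda_\Gamma|=1$ does not occur.)

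For item \eqref{item:cc sub}, fix $x_0\in C$ and let $\varphi\colon\Gamma\to C$, $\gamma\mapsto\gamma x_0$, be the (quasi-isometric) orbit map. If $H\leq\Gamma$ is quasiconvex, then $\varphi(H)=Hx_0$ is a quasiconvex subset of $C$, because quasiconvexity of subsets is preserved under quasi-isometries of hyperbolic spaces; and since $C$ is convex in $\hh^d$, the geodesics of $\hh^d$ joining two points of $Hx_0$ coincide with those of $C$, so $Hx_0$ is in fact $K$-quasiconvex in $\hh^d$ for some $K$. Its limit set $\Lambda_H$ is the set of accumulation points of $Hx_0$ in $\Sb^{d-1}$, and $\Lambda_H\subseteq\Lambda_\Gamma$, so $\mathrm{Conv}(\Lambda_H)\subseteq C$.

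The key remaining step is to prove $\mathrm{Conv}(\Lambda_H)\subseteq N_R(Hx_0)$ for some $R$; granting this, cocompactness is immediate, since for any $y\in\mathrm{Conv}(\Lambda_H)$ there is $h\in H$ with $d(h^{-1}y,x_0)\leq R$, and as $\mathrm{Conv}(\Lambda_H)$ is $H$-invariant we obtain $\mathrm{Conv}(\Lambda_H)=H\cdot\big(\overline{B}(x_0,R)\cap\mathrm{Conv}(\Lambda_H)\big)$ with the set in parentheses compact, so $H$ acts cocompactly on $\mathrm{Conv}(\Lambda_H)$, i.e.\ $H$ is convex cocompact. To prove the inclusion: each bi-infinite geodesic of $\hh^d$ joining two points $\xi,\eta\in\Lambda_H$ is a limit of geodesic segments $[h_n x_0,h'_m x_0]$ with $h_n x_0\to\xi$ and $h'_m x_0\to\eta$, and each such segment lies in $N_K(Hx_0)$ by quasiconvexity; since $Hx_0$ is closed in $\hh^d$ (the $H$-action is proper), the whole geodesic $(\xi,\eta)$ lies in $N_K(Hx_0)$. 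One then invokes the standard fact of real hyperbolic geometry that there is a constant $\delta_d$, depending only on $d$, such that the convex hull of any closed subset of $\Sb^{d-1}$ with at least two points is contained in the $\delta_d$-neighborhood of the union of the bi-infinite geodesics joining pairs of its points; applied to $\Lambda_H$ this yields $\mathrm{Conv}(\Lambda_H)\subseteq N_{K+\delta_d}(Hx_0)$. (If $|\Lambda_H|\leq 1$ then $H$ is finite and $\mathrm{Conv}(\Lambda_H)$ is empty or a point; if $|\Lambda_H|=2$ then $H$ is virtually cyclic and $\mathrm{Conv}(\Lambda_H)$ is the axis of a loxodromic element of $H$, on which $H$ plainly acts cocompactly.) I expect this last geometric input --- the uniform closeness of a convex hull to the union of the geodesics through its ideal points --- together with the careful transfer of quasiconvexity among the word metric, the intrinsic metric on $C$, and the metric of $\hh^d$, to be the only non-formal ingredients of the argument.
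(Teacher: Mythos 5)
Your argument is correct and follows essentially the same route as the paper: the paper simply cites \cite{SW01} (Theorem 12 for item \eqref{item:cc hyp}, and the Main Theorem's characterization of convex cocompactness via quasiconvexity of orbits, combined with the observation that an $H$-orbit is quasiconvex in the $\Gamma$-orbit, for item \eqref{item:cc sub}), and what you have written is precisely a self-contained unpacking of those citations, with the \v{S}varc--Milnor lemma handling hyperbolicity and the standard fact that a convex hull in $\hh^d$ lies in a uniform neighborhood of the union of geodesics between its ideal points handling cocompactness. The only soft spot is the parenthetical dismissal of $|\Lambda_\Gamma|=1$ (the translation-length argument presupposes a nonempty convex core), but this degenerate case is conventionally excluded and the paper does not address it either.
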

\begin{proof}
Part \eqref{item:cc hyp} is  \cite[Theorem 12]{SW01}.
    Part \eqref{item:cc sub} follows from the fact that an $H$-orbit  is quasiconvex in the corresponding $\Gamma$-orbit, together with the characterization of convex cocompactness in terms of quasiconvexity of orbits from \cite[Main Theorem]{SW01}.
\end{proof}

\section{Construction in \texorpdfstring{$\hh^4$}{H4}}

\subsection{The 600-cell as a union of two solid tori}

Among the six convex regular $4$-polytopes, there is precisely one with $600$ facets, namely, the hexacosichoron.
From now on, we will refer to the simplicial complex given by the boundary of the hexacosichoron as the $600$-cell, denoted $\six$. 
As such, $\six$ is a triangulation of the 3-dimensional sphere $\Sb^3$ consisting of 600 tetrahedra, 1200 triangles, 720 edges, and 120 vertices. Each vertex is incident to 20 tetrahedra and 12 edges.
We will now review a certain decomposition of $\six$ that is relevant for the purposes of this paper.
For additional details about the 600-cell see \cite[pp. 19-23, \S9]{CO70}.

We may decompose $\six$ as a union of two solid tori  $\tbtm$  (the \textit{bottom torus}), $\ttop$ (the \textit{top torus}), and an \textit{interface}~$\interface$ homeomorphic to $T^2\times [0,1]$.
Each solid torus can be constructed as follows:
\begin{enumerate}
    
    \item A \textit{flying saucer} is the 3-dimensional simplicial complex obtained by gluing 5 tetrahedra along a common edge. 
    Equivalently, this is obtained by taking the suspension of the boundary of a pentagon and connecting the suspension vertices with an additional {\em internal} edge, and then filling with tetrahedra.
    The boundary is a pentagonal bipyramid.
    See Figure~\ref{fig:saucers}.

    \item Consider the complex obtained by stacking 10 flying saucers vertex to vertex in such a way that the internal edges form a cycle of length 10. Now, fill in the gaps between two consecutive flying saucers with 10 \textit{interstitial} tetrahedra so that the union of two consecutive flying saucers with the interstitial tetrahedra is an icosahedral pyramid. See Figure~\ref{fig:saucers}.
    The resulting complex is a triangulation of a solid torus $T_i$ consisting of 150 tetrahedra.

    \item The surface $\partial T_i$ of each of  $T_i$ is the same as the outer surface of the complex consisting of 10 pentagonal antiprisms---so-called {\em drums}---stacked together to form a torus with 100 exposed\footnote{A simplex is {\em exposed} if it is contained in $\partial T_i$.} triangles  (with a contribution of $10$ from each drum), 150 exposed edges, and 50 exposed vertices. 
    In particular the triangulation of $\partial T_i$ is that of a torus by 100 triangles, in which every vertex has 6 neighbors. See Figure~\ref{fig:h4}.
\end{enumerate}

\begin{figure}[h]
\includegraphics[width=.75\columnwidth]{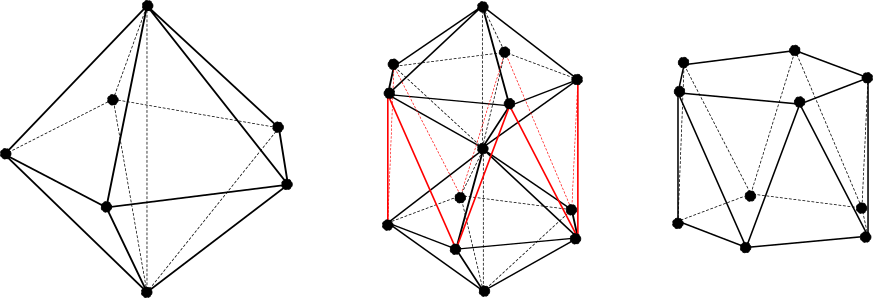}
\caption{From left to right: a flying saucer, two flying saucers stacked vertex-to-vertex with ten interstitial tetrahedra (indicated in red) around their vertex of intersection, and a drum.}
\label{fig:saucers}
\end{figure}

The interface $\interface$ has the following structure:
\begin{enumerate}
    \item To each of the 100 exposed triangles of $\tbtm$, we glue a tetrahedron (which we call a \textit{raised} tetrahedron), in the following way: the raised tetrahedra come in pairs, identified along the triangular faces that share an edge making an angle of $60^\circ$ with the positive horizontal direction in Figure~\ref{fig:h4}. 
    The unique common vertex among such a pair of tetrahedra which is not a vertex of $\tbtm$ will be called the \textit{raised vertex} of that pair. There are 50 raised vertices.
    We do the same for $\ttop$.
    
    \item The interface is obtained by interlocking the raised tetrahedra for the top and bottom tori, so that the raised vertex of a raised tetrahedron on $\tbtm$ gets identified with an exposed vertex in $\ttop$, and vice versa. 
    After doing so, some gaps are left, that can be filled with  100 \textit{filling} tetrahedra, each of which has an edge on $\tbtm$ and an edge on $\ttop$.
\end{enumerate}

Let $v$ be a vertex on the surface of one of the $T_i$, say, the bottom torus $\tbtm$. 
We now describe the 12 neighbors of $v$ in $\six$ with respect to the above decomposition of $\six$:
\begin{enumerate}

\item 2 neighbors  in the interior of $\tbtm$. 

\item 6 neighbors $w_i$ on $\partial \tbtm$. 
The edges $[v,w_i]$ are on $\partial \tbtm$.

\item 4 neighbors  on $\partial \ttop$; indeed $v$ is a raised vertex for $\ttop$, and is hence the raised vertex for a pair of raised tetrahedra of $\ttop$.
\end{enumerate}


\subsection{Proof of Theorem~\ref{main4}}

 Among the six convex regular $4$-polytopes, there is precisely one with 120 facets, namely,
 the dodecacontachoron. The right-angled dodecacontachoron can be realized as a compact polytope $\mathcal{P}_{120}$ in $\hh^4$ (see e.g. \cite{Coxeter56,DA85} or \cite[Section~1.3]{martelli_at_rescue}).
 By Poincar\'e's polyhedron theorem, the group $\mathrm{Refl}(\mathcal{P}_{120})$ generated by the reflections in the facets of $\mathcal{P}_{120}$ is a uniform lattice in $\hh^4$ and $\mathrm{Refl}(\mathcal{P}_{120})$ is the RACG defined by the 1-skeleton of the dual of $\partial \mathcal{P}_{120}$, i.e., the 1-skeleton of $\six$, so that $\mathrm{Refl}(\mathcal{P}_{120})\cong W_{\six}$.

Being isomorphic to a uniform lattice of $\mathrm{Isom} (\hh^4)$, the group $W_{\six}$ is a Gromov hyperbolic group with Gromov boundary $\Sb^3$. In particular, $\six$ is a flag-no-square complex, by Lemma~\ref{lem:racg}.\eqref{item:racg hyp}. We will show that the subgroup $\Gamma^4$ of $W_{\six}$ generated by the vertices in $\partial \tbtm$ (or, equivalently, $\partial \ttop$) has limit set a Pontryagin sphere. The triangulation of $\partial \tbtm$ is as in Figure~\ref{fig:h4}, with opposite edges are identified.
\begin{figure}[ht]
\centering
\def\svgwidth{.75\columnwidth}
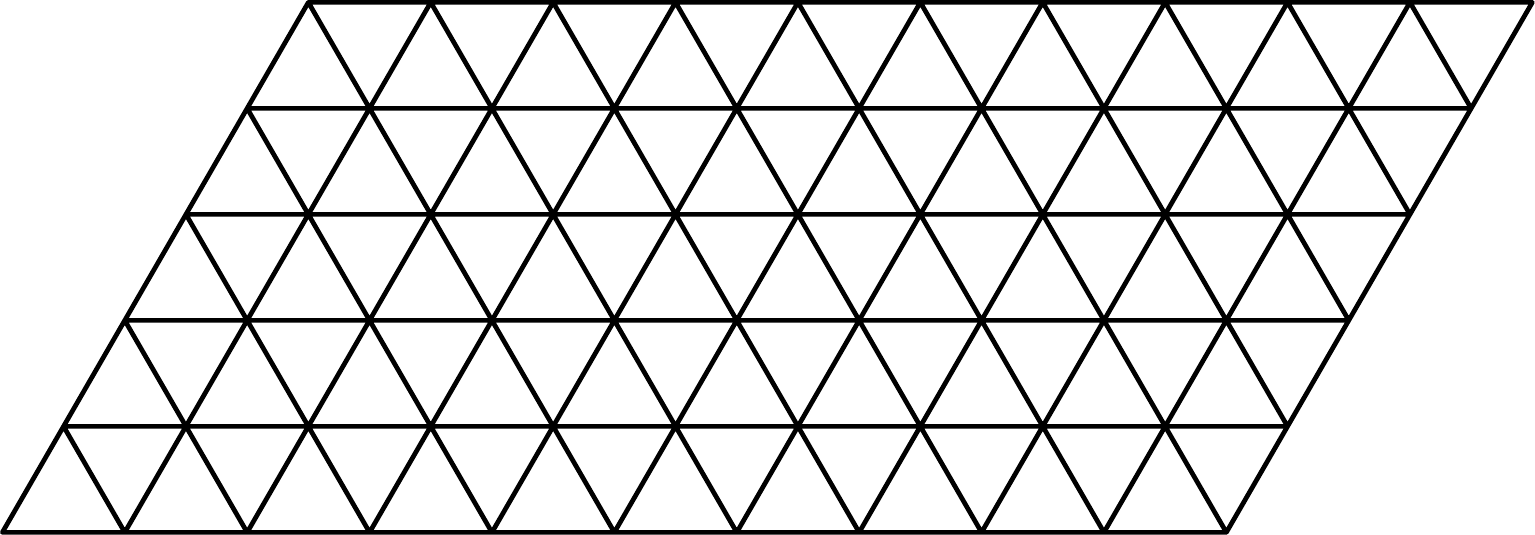  
\caption{The triangulation of $\partial \tbtm$ with 50 vertices, 150 edges, and 100 triangles. Opposite edges are identified. This defines a RACG generated by 50 reflections inside $W_{\six}$.} 
\label{fig:h4}
\end{figure}

\begin{proposition}
The subcomplex $\partial \tbtm$ of $\six$ is a flag-no-square triangulation of a torus and a full subcomplex of $\six$.
\end{proposition}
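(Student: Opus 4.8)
The plan is to establish the three claimed properties of $\partial \tbtm$ as a subcomplex of $\six$ separately, in increasing order of difficulty: (i) it triangulates a torus, (ii) it is flag-no-square, (iii) it is a full subcomplex of $\six$. For (i), I would simply invoke the explicit combinatorial description already set up in the construction: $\partial \tbtm$ is the outer surface of a stack of $10$ pentagonal antiprisms (drums) glued into a solid torus, which by inspection has $50$ vertices, $150$ edges, and $100$ triangles, hence Euler characteristic $0$; since it is a closed combinatorial surface (each edge lies on exactly two triangles, each vertex link is a cycle — indeed every vertex has exactly $6$ neighbors on $\partial \tbtm$, as recorded in the neighbor count for vertices on $\partial T_i$), it is either a torus or a Klein bottle, and it is a torus because it bounds the orientable solid torus $\tbtm$ in $\Sb^3$. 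The picture in Figure~\ref{fig:h4} with opposite edges identified makes this concrete.

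For (ii), flag-no-square, I would argue that it suffices to check fullness in $\six$ (step (iii)) together with the fact that $\six$ itself is flag-no-square. Indeed, $\six$ is flag-no-square because $W_{\six}$ is Gromov hyperbolic (being a uniform lattice in $\mathrm{Isom}(\hh^4)$), by Lemma~\ref{lem:racg}.\eqref{item:racg hyp}. A full subcomplex of a flag complex is flag (noted in the Preliminaries), and a full subcomplex of a flag-no-square complex has no induced squares: any square in $\partial \tbtm$ would be an induced square in $\six$ (fullness passes up, as also noted in the Preliminaries), contradicting flag-no-squareness of $\six$. So once (iii) is in hand, (ii) is immediate. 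One should double-check that no "obvious" square in the combinatorial torus picture is actually filled inside $\six$ by some tetrahedron using an internal edge — but this is subsumed by the fullness argument.

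The main obstacle is therefore (iii): showing $\partial \tbtm$ is a \emph{full} subcomplex of $\six$, i.e., that whenever vertices of $\partial \tbtm$ span a simplex of $\six$, that simplex already lies in $\partial \tbtm$. Equivalently, I must rule out the existence of an edge of $\six$ joining two vertices of $\partial \tbtm$ that is not an exposed edge, and of a triangle or tetrahedron of $\six$ on vertices of $\partial \tbtm$ that is not already exposed. The key tool is the local analysis already carried out: for a vertex $v$ on $\partial \tbtm$, its $12$ neighbors in $\six$ split as $2$ in the interior of $\tbtm$, $6$ on $\partial \tbtm$ (joined to $v$ by exposed edges), and $4$ on $\partial \ttop$. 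So I would first observe that \emph{none} of the $12$ neighbors of $v$ lies in $V(\partial \tbtm) \setminus \{$the $6$ exposed neighbors$\}$: the interior neighbors are not on $\partial \tbtm$ at all, and the $4$ remaining neighbors lie on $\partial \ttop$, which is disjoint from $\partial \tbtm$ (the two tori are separated by the interface $\interface \cong T^2 \times [0,1]$). Hence every edge of $\six$ between two vertices of $\partial \tbtm$ is exposed, giving fullness on $1$-skeleta. Then, since $\partial \tbtm$ is flag (it is a triangulation with no empty triangles — each of its triangles is a drum face) and $\six$ is flag, fullness in dimension $1$ upgrades to fullness in all dimensions: any higher simplex of $\six$ on vertices of $\partial \tbtm$ has all its edges in $\partial \tbtm$, hence (by flagness of $\partial \tbtm$ as a surface triangulation, where the only simplices are vertices, edges, and the evident triangles) spans a simplex already present — here one uses that a set of $\geq 4$ pairwise-adjacent vertices cannot occur in a surface triangulation, and a set of $3$ pairwise-adjacent vertices of $\partial\tbtm$ spans a triangle of $\partial\tbtm$ because $\partial\tbtm$ is flag. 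I expect the one subtlety worth spelling out carefully is the claim that $\partial\tbtm$ and $\partial\ttop$ share no vertices and that the $4$ non-exposed, non-interior neighbors of $v$ indeed land on $\partial\ttop$ rather than in $\interface$ or on $\partial\tbtm$ again — this follows from the description of $v$ as the raised vertex of a pair of raised tetrahedra of $\ttop$, whose other vertices are exposed vertices of $\ttop$.
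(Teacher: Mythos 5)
Your proposal is correct, and its core — the fullness argument — is essentially the paper's: you use the tally of the $12$ neighbors of a vertex $v \in \partial\tbtm$ ($2$ interior to $\tbtm$, $6$ on $\partial\tbtm$ via exposed edges, $4$ on the disjoint surface $\partial\ttop$) to conclude that the $1$-skeleton of $\partial\tbtm$ is induced in that of $\six$, and then upgrade to fullness in all dimensions using flagness; the subtlety you flag (that the remaining $4$ neighbors land on $\partial\ttop$, disjoint from $\partial\tbtm$) is exactly the content of the neighbor description and is worth making explicit. Where you genuinely diverge is the flag-no-square property: the paper verifies it directly and independently of fullness, observing that $\partial\tbtm$ is a quotient of an equilateral triangulation of a parallelogram in $\rr^2$ whose sides have combinatorial length greater than $4$ (Figure~\ref{fig:h4}), whereas you derive it as a corollary of fullness together with the flag-no-squareness of $\six$ (itself obtained from hyperbolicity of $W_{\six}$ via Lemma~\ref{lem:racg}). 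Both routes are valid and non-circular; the paper's has the advantage of being self-contained at the level of the torus picture (and is anyway needed to know that $\partial\tbtm$ is flag before running the fullness upgrade), while yours illustrates the general fact that full subcomplexes inherit the flag-no-square property, at the cost of routing an elementary combinatorial check through Moussong's theorem. Your additional verification that $\partial\tbtm$ triangulates a torus (Euler characteristic zero, orientable since it bounds a solid torus in $\Sb^3$) is not spelled out in the paper's proof but is consistent with its construction.
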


\begin{proof}
The triangulation is clearly flag-no-square; indeed this is a quotient of an equilateral triangulation of a parallelogram in $\rr^2$, whose sides have combinatorial length larger than $4$, see Figure~\ref{fig:h4}.
To see that $\partial \tbtm$ is a full subcomplex we argue as follows.
Let $v$ be a vertex of $\partial \tbtm$. 
The above description of the neighbors of $v$ shows that all the edges of $\six$ connecting $v$ with another vertex on $\partial \tbtm$ are already contained in $\partial \tbtm$, namely they are the 6 edges to the 6 neighbors on $\partial \tbtm$. 
So, the 1-skeleton of $\partial \tbtm$ is an induced subgraph of the 1-skeleton of $\six$.
Since $\partial \tbtm$ is a flag 2-complex, it follows that $\partial \tbtm$ is a full subcomplex.
\end{proof}

With this proposition, we can see that the limit set of $\Gamma^4$ is a Pontryagin sphere, as follows. 
Since $\partial \tbtm$ is a full subcomplex of $\six$, 
it follows from \eqref{item:racg special} in Lemma~\ref{lem:racg}
that the special subgroup $\Gamma^4$ is quasiconvex and isomorphic to the abstract RACG $W_{\partial \tbtm}$.
Since $\mathrm{Refl}(\mathcal{P}_{120})$ acts cocompactly on $\hh^4$, Lemma~\ref{lem:convexccpt} implies that~$\Gamma^4$ is convex cocompact in $\mathrm{Isom}(\hh^4)$ and in particular that the limit set of $\Gamma^4$ is homeomorphic to the Gromov boundary of $\Gamma^4$.
Finally,  since the nerve $\partial \tbtm$ of $W_{\partial \tbtm}$ is a flag-no-square triangulation of a closed orientable surface of positive genus, the Gromov boundary of $\Gamma^4 \cong W_{\partial \tbtm}$ is a Pontryagin sphere by Lemma~\ref{lem:pontryagin}.

\begin{remark}
    By construction, $\Gamma^4$ is a subgroup of a uniform lattice $\mathrm{Refl}(\mathcal{P}_{120})$ in $\hh^4$. It follows from work of Bugaenko \cite{bugaenko} that $\mathrm{Refl}(\mathcal{P}_{120})$ has finite index in the group $O(f_\varphi , \mathbb{Z}[\varphi])$, where $\varphi = \frac{1+ \sqrt{5}}{2}$, and $f_\varphi$ is the bilinear form on $\mathbb{R}^5$ given by the diagonal matrix $(1,1,1,1, -\varphi)$, so that $\mathrm{Refl}(\mathcal{P}_{120})$ is an arithmetic reflection lattice.
\end{remark}

\begin{remark}
Figure~\ref{fig:pontryagin} was obtained by Theodore Weisman by plotting the limit set of the action of $\Gamma^4$ on $\hh^4$ given by restricting the action of $\mathrm{Refl}(\mathcal{P}_{120})$.
\end{remark}


\section{Construction in \texorpdfstring{$\hh^6$}{H6}}

\subsection{An abstract Coxeter group with \texorpdfstring{$21$}{21} generators}

Let $\T$ be the flag simplicial complex determined by the graph depicted in Figure \ref{fig:nerve} (vertices sharing the same label should be identified and the boundary edges should be identified accordingly). Since every homotopically nontrivial edge-cycle in $\T$ has combinatorial length greater than $4$, the complex is flag-no-square. The geometric realization of $\T$ is a topological surface $S$ with 21 vertices, 63 edges and 42 faces, so that $\chi (S)= 0$. Hence, $\T$ is a flag-no-square triangulation of a 2-torus. The goal is to build a convex cocompact action of $W_{\T}$ on $\hh^6$ as a reflection group.

In fact, the presentation of $W_{\T}$ can be expressed rather simply, and usefully, as follows. For $i=1$, the vertices adjacent to the vertex labeled $i$ in $\T$ have the labels $i \pm 1$, $i \pm 4$, and $i \pm 5$ (mod $21$). Additionally, when one goes from the vertex $i$ to the vertex $i+1$, the labels of all adjacent vertices increase by $1$ (mod $21$). Hence, let $M = (m_{i,j})$ be the symmetric $21 \times 21$ matrix with entries in $\{ 1, 2, \infty \}$ given by
\begin{equation*}
m_{i,j} =
\begin{cases}
1 & \text{if } i = j;\\
2 & \text{if } |i - j| = \pm 1, \pm 4 \text{ or } \pm 5 \; (\bmod\, 21);\\
\infty & \text{otherwise}.
\end{cases}
\end{equation*}
Then
\begin{equation*}
W_{\T} = \langle s_1, \dotsc, s_{21} \mid (s_i s_j)^{m_{i,j}} = 1, \;\forall 1 \leqslant i,j \leqslant 21 \rangle.
\end{equation*}
Since $\T$ is flag-no-square, we have by Lemma~\ref{lem:racg}.\eqref{item:racg hyp} that $W_{\T}$ is Gromov hyperbolic.

\begin{figure}[ht!]
\begin{tikzpicture}[thick,scale=0.8, every 
node/.style={transform shape}]

\node[draw,circle, inner sep=3.5pt, minimum size=3.5pt] (15B) at (-1.3,2.25) {{8}};
\node[draw,circle, inner sep=3.5pt, minimum size=3.5pt] (11B) at (-1.3,3.75) {{9}};

\node[draw,circle, inner sep=2pt, minimum size=2pt] (3B) at (0,0) {{11}};
\node[draw,circle, inner sep=2pt, minimum size=2pt] (20B) at (0,1.5) {{12}};
\node[draw,circle, inner sep=2pt, minimum size=2pt] (16) at (0,3.0) {{13}};
\node[draw,circle, inner sep=2pt, minimum size=2pt] (12B) at (0,4.5) {{14}};
\node[draw,circle, inner sep=2pt, minimum size=2pt] (8C) at (0,6.0) {{15}};

\node[draw,circle, inner sep=2pt, minimum size=2pt] (8B) at (1.3,-0.75) {{15}};
\node[draw,circle, inner sep=2pt, minimum size=2pt] (4) at (1.3,0.75) {{16}};
\node[draw,circle, inner sep=2pt, minimum size=2pt] (21) at (1.3,2.25) {{17}};
\node[draw,circle, inner sep=2pt, minimum size=2pt] (17) at (1.3,3.75) {{18}};
\node[draw,circle, inner sep=2pt, minimum size=2pt] (13) at (1.3,5.25) {{19}};
\node[draw,circle, inner sep=2pt, minimum size=2pt] (9B) at (1.3,6.75) {{20}};

\node[draw,circle, inner sep=2pt, minimum size=2pt] (9A) at (2.6,0) {{20}};
\node[draw,circle, inner sep=2pt, minimum size=2pt] (5) at (2.6,1.5) {{21}};
\node[draw,circle, inner sep=3.5pt, minimum size=3.5pt] (1) at (2.6,3.0) {{1}};
\node[draw,circle, inner sep=3.5pt, minimum size=3.5pt] (18) at (2.6,4.5) {{2}};
\node[draw,circle, inner sep=3.5pt, minimum size=3.5pt] (14A) at (2.6,6.0) {{3}};

\node[draw,circle, inner sep=3.5pt, minimum size=3.5pt] (14B) at (3.9,-0.75) {{3}};
\node[draw,circle, inner sep=3.5pt, minimum size=3.5pt] (10) at (3.9,0.75) {{4}};
\node[draw,circle, inner sep=3.5pt, minimum size=3.5pt] (6) at (3.9,2.25) {{5}};
\node[draw,circle, inner sep=3.5pt, minimum size=3.5pt] (2) at (3.9,3.75) {{6}};
\node[draw,circle, inner sep=3.5pt, minimum size=3.5pt] (19) at (3.9,5.25) {{7}};
\node[draw,circle, inner sep=3.5pt, minimum size=3.5pt] (15A) at (3.9,6.75) {{8}};

\node[draw,circle, inner sep=3.5pt, minimum size=3.5pt] (15C) at (5.2,0) {{8}};
\node[draw,circle, inner sep=3.5pt, minimum size=3.5pt] (11A) at (5.2,1.5) {{9}};
\node[draw,circle, inner sep=2pt, minimum size=2pt] (7) at (5.2,3.0) {{10}};
\node[draw,circle, inner sep=2pt, minimum size=2pt] (3A) at (5.2,4.5) {{11}};
\node[draw,circle, inner sep=2pt, minimum size=2pt] (20A) at (5.2,6.0) {{12}};

\node[draw,circle, inner sep=2pt, minimum size=2pt] (12A) at (6.5,2.25) {{14}};
\node[draw,circle, inner sep=2pt, minimum size=2pt] (8A) at (6.5,3.75) {{15}};

\draw (15B) -- (11B);

\draw (3B) -- (20B);
\draw (20B) -- (16);
\draw (16) -- (12B);
\draw (12B) -- (8C);

\draw (8B) -- (4);
\draw (4) -- (21);
\draw (21) -- (17);
\draw (17) -- (13);
\draw (13) -- (9B);

\draw (9A) -- (5);
\draw (5) -- (1);
\draw (1) -- (18);
\draw (18) -- (14A);

\draw (14B) -- (10);
\draw (10) -- (6);
\draw (6) -- (2);
\draw (2) -- (19);
\draw (19) -- (15A);

\draw (15C) -- (11A);
\draw (11A) -- (7);
\draw (7) -- (3A);
\draw (3A) -- (20A);

\draw (12A) -- (8A);
\draw (8C) -- (9B);

\draw (11B) -- (12B);
\draw (12B) -- (13);
\draw (13) -- (14A);
\draw (14A) -- (15A);

\draw (15B) -- (16);
\draw (16) -- (17);
\draw (17) -- (18);
\draw (18) -- (19);
\draw (19) -- (20A);

\draw (20B) -- (21);
\draw (21) -- (1);
\draw (1) -- (2);
\draw (2) -- (3A);

\draw (3B) -- (4);
\draw (4) -- (5);
\draw (5) -- (6);
\draw (6) -- (7);
\draw (7) -- (8A);

\draw (8B) -- (9A);
\draw (9A) -- (10);
\draw (10) -- (11A);
\draw (11A) -- (12A);

\draw (14B) -- (15C);

\draw (15A) -- (20A);

\draw (9B) -- (14A);
\draw (14A) -- (19);
\draw (19) -- (3A);
\draw (3A) -- (8A);

\draw (8C) -- (13);
\draw (13) -- (18);
\draw (18) -- (2);
\draw (2) -- (7);
\draw (7) -- (12A);

\draw (12B) -- (17);
\draw (17) -- (1);
\draw (1) -- (6);
\draw (6) -- (11A);

\draw (11B) -- (16);
\draw (16) -- (21);
\draw (21) -- (5);
\draw (5) -- (10);
\draw (10) -- (15C);

\draw (15B) -- (20B);
\draw (20B) -- (4);
\draw (4) -- (9A);
\draw (9A) -- (14B);

\draw (3B) -- (8B);
\end{tikzpicture}
\caption{The nerve of the Coxeter group $W_{\T}$}\label{fig:nerve}
\end{figure}
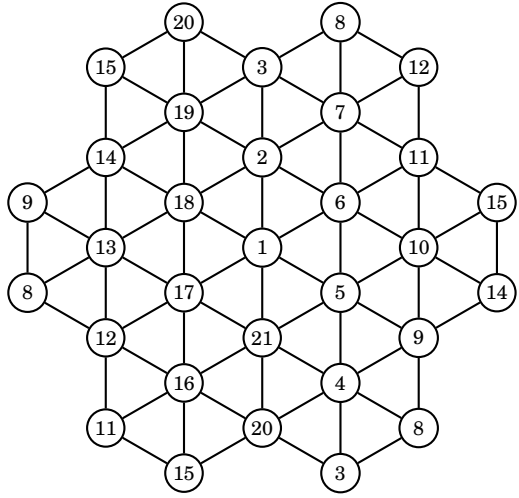

\subsection{Construction of the reflection group \texorpdfstring{$\Gamma^6$}{Gamma6}  of Theorem~\ref{main6}}

In this section, we construct an action $W_{\T}$ on $\hh^6$ as a reflection group.

\begin{proposition}\label{main6_1st_half}
 There exists a convex cocompact right-angled reflection group $\Gamma^6$ of $\mathrm{Isom} (\hh^6)$ whose limit set is homeomorphic to the Pontryagin sphere.
\end{proposition}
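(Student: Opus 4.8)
The plan is to realise $W_{\T}$ geometrically as the group generated by reflections in the facets of a Coxeter polytope in $\hh^6$, choosing everything equivariantly with respect to the order-$21$ automorphism $s_i\mapsto s_{i+1}$ of $W_{\T}$, and then to read convex cocompactness off the combinatorics of $\T$. Concretely, I would look for an order-$21$ isometry $\rho$ of $\hh^6$ and a unit spacelike vector $e_0\in\rr^{6,1}$, set $e_i:=\rho^i e_0$, and let $r_i$ be the reflection in the hyperplane $e_i^\perp$; then the Gram matrix $b_{ij}:=\langle e_0,\rho^{\,j-i}e_0\rangle$ is automatically circulant. I would arrange for $\rho$ to act on $\rr^{6,1}=L\oplus P_1\oplus P_2\oplus P_3$ as the identity on a line $L$ and as a rotation of angle $2\pi j_k/21$ on a plane $P_k$ (a short computation shows that if the $e_i$ are to span $\rr^{6,1}$, then $\rho$ must have exactly this shape, and that the signature is $(6,1)$ precisely when $L$ is timelike and each $P_k$ positive definite); thus $\rho$ is an elliptic rotation fixing the point $[L]\in\hh^6$. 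Writing $e_0=\ell+p_1+p_2+p_3$ accordingly yields $b_{ij}=c_{j-i}$ with $c_m=-a_0+\sum_{k=1}^3 a_k\cos(2\pi j_k m/21)$, where $a_0=|\ell|^2>0$ and $a_k=|p_k|^2>0$.

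Next I would choose the parameters so that the reflection group is exactly $W_{\T}$. For $\Gamma^6:=\langle r_0,\dots,r_{20}\rangle$ to be the right-angled Coxeter group $W_{\T}$, acting on the Coxeter polytope $P=\bigcap_i\{x:\langle x,e_i\rangle\leq0\}$, the facets $e_i^\perp$ and $e_j^\perp$ must meet orthogonally exactly when $i-j\equiv\pm1,\pm4,\pm5\ (\bmod\,21)$ and be disjoint otherwise; in terms of the $c_m$ this reads $c_0=1$, $c_1=c_4=c_5=0$, and $c_m\leq-1$ for $m\in\{2,3,6,7,8,9,10\}$ (with strict inequality desired, so that these facets are ultraparallel rather than parallel). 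The conditions $c_0=1$, $c_1=c_4=c_5=0$ form a linear system of four equations in the four unknowns $a_0,a_1,a_2,a_3$, so for a suitable triple $\{j_1,j_2,j_3\}\subseteq\{1,\dots,10\}$ with $\gcd(j_1,j_2,j_3,21)=1$ it has a unique solution; I would then exhibit one such triple for which additionally $a_0,a_1,a_2,a_3>0$ (so that the form really has signature $(6,1)$ and $\rho$ has order exactly $21$) and $c_m<-1$ for all seven remaining residues. For the resulting $21\times 21$ matrix, every principal submatrix indexed by a simplex of $\T$ is an identity matrix, hence positive definite, so the Tits--Vinberg theory of linear reflection groups (see, e.g., \cite{DA08}) gives that $\Gamma^6$ is a discrete subgroup of $\mathrm{Isom}(\hh^6)$, isomorphic to $W_{\T}$, with fundamental domain the $21$-faceted Coxeter polytope $P$; moreover $\langle\rho,r_0\rangle$ is a $2$-generated overgroup containing $\Gamma^6$ as a normal subgroup of index $21$, which will eventually give Theorem~\ref{main6}.

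To finish I would deduce convex cocompactness and identify the limit set. Since $P$ has finitely many facets, $\Gamma^6$ has a finite-sided fundamental polyhedron and is therefore geometrically finite; and it has no parabolic elements. Indeed, a parabolic element of a reflection group lies in a cusp subgroup, which is an affine Coxeter subgroup: a rank-$2$ affine (i.e.\ $\widetilde A_1$) cusp subgroup would force two facets to be parallel rather than ultraparallel, contradicting $c_m<-1$, while any affine Coxeter subgroup of rank $\geq 4$ contains $\widetilde A_1\times\widetilde A_1$ and hence forces an induced square in the nerve, impossible because $\T$ is flag-no-square. A geometrically finite group with no parabolics is convex cocompact, so $\Gamma^6$ is convex cocompact in $\mathrm{Isom}(\hh^6)$. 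By Lemma~\ref{lem:convexccpt}.\eqref{item:cc hyp} its limit set is homeomorphic to $\partial_\infty\Gamma^6=\partial_\infty W_{\T}$, which is the Pontryagin sphere by Lemma~\ref{lem:pontryagin}, since $\T$ is a flag-no-square triangulation of the $2$-torus.

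The hard part will be the middle step: it is not clear a priori that any admissible triple $\{j_1,j_2,j_3\}$ yields a solution of the linear system meeting all of the positivity and ultraparallelism inequalities at once, so this should come down to a finite but delicate search over the $\binom{10}{3}$ triples, followed by verifying the Tits--Vinberg hypotheses for whatever explicit matrix one lands on. A secondary point to handle carefully is the precise reference ensuring that a reflection group with a finite-sided fundamental polyhedron in $\hh^6$ is geometrically finite, so that ``no parabolics'' upgrades to convex cocompactness.
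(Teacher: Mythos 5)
Your strategy is essentially the one the paper follows: produce a $21\times 21$ Gram matrix of signature $(6,1)$ (plus a $14$-dimensional kernel) whose zero/negative pattern realizes $\T$ as the nerve, invoke Vinberg's theorem \cite[Theorem~2.1]{Vin85} to obtain a right-angled Coxeter polytope $\mathcal{P}_{21}\subset\hh^6$ with $\mathrm{Refl}(\mathcal{P}_{21})\cong W_{\T}$, deduce convex cocompactness from the absence of asymptotic facets, and conclude via Lemma~\ref{lem:convexccpt} and Lemma~\ref{lem:pontryagin}. Your equivariant setup --- a circulant Gram matrix $c_m=-a_0+\sum_k a_k\cos(2\pi j_k m/21)$ coming from an order-$21$ elliptic rotation applied to a single spacelike vector --- is precisely the structure the paper uses in Proposition~\ref{main6_2nd_half}, and it nicely explains why only three off-diagonal values $u,v,w$ occur: for $\{j_1,j_2,j_3\}=\{1,4,5\}$, multiplication by $4$ modulo $21$ permutes the adjacency residues $\{\pm1,\pm4,\pm5\}$ and has orbits $\{3,6,9\}$, $\{2,8,10\}$, $\{7\}$ on the rest.

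There are, however, two gaps. First, the existence claim is never actually discharged: you reduce it to solving a $4\times 4$ linear system for each admissible triple $\{j_1,j_2,j_3\}$ and then checking the open conditions $a_k>0$ and $c_m<-1$ for $m\in\{2,3,6,7,8,9,10\}$, but you exhibit no triple for which the check succeeds, and nothing guarantees a priori that one exists; this verification is the entire content of the proposition. (It does succeed: the paper takes $\{j_1,j_2,j_3\}=\{1,4,5\}$ with $a_1=a_2=a_3=\tfrac{1}{25}(11+\sqrt{21})$ and $a_0=\tfrac{1}{25}(8+3\sqrt{21})$, yielding the values $u,v,w>1$ in the displayed Gram matrix, with the signature and inequalities verified by an exact computation in SageMath.) Second, your convex cocompactness argument rests on ``finite-sided fundamental polyhedron $\Rightarrow$ geometrically finite,'' which is known to fail in general in dimension $\geq 4$ (this is exactly the defective clause among Bowditch's equivalent definitions of geometrical finiteness), and on the unproved assertion that every parabolic of a hyperbolic reflection group lies in an affine special subgroup. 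The paper avoids both issues by citing \cite[Theorem~4.12]{DH13}: since every pair of non-orthogonal walls satisfies $c_m<-1$, the polytope $\mathcal{P}_{21}$ has no asymptotic facets, and that criterion yields convex cocompactness directly. You should replace your geometric-finiteness step with that (or an equivalent) reference.
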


\begin{proof}
Let $A = (A_{i,j})$ be the symmetric $21 \times 21$ matrix given by 

\begin{equation*}
A_{i,j} =
\begin{cases}
1 & \text{if } i = j; \\
0 & \text{if } |i - j| = \pm 1, \pm 4 \text{ or} \pm 5 \; (\bmod\, 21);\\
-u & \text{if } |i - j| = \pm 3, \pm 6 \text{ or} \pm 9 \; (\bmod\, 21);\\
-v & \text{if } |i - j| = \pm 2, \pm 8 \text{ or} \pm 10 \; (\bmod\, 21);\\
-w & \text{if } |i - j| = \pm 7 \; (\bmod\, 21),
\end{cases}
\end{equation*}
where $u= \frac{1}{50} \left(27+7\sqrt{21}\right)$, $v = \frac{1}{50} \left(21+11 \sqrt{21}\right)$ and $w = \frac{1}{50} \left(49+9 \sqrt{21}\right)$.  
A computation shows that the matrix $A$ has rank $7$ and signature $(6,1,14)$, see Remark~\ref{rem:computation}.
So, \cite[Theorem~2.1]{Vin85} shows that there exists a right-angled Coxeter polytope $\mathcal{P}_{21}$ with $21$ facets in $\hh^6$ whose Gram matrix is $A$. The group $\Gamma^6 =  \mathrm{Refl}(\mathcal{P}_{21})$ generated by the $21$ reflections along the boundary hyperplanes of $\mathcal{P}_{21}$ is isomorphic to $W_{\T}$, by Poincar\'e's polyhedron theorem, since $u, v, w \geq 1$.

The group $\Gamma^6$ is convex cocompact since $\mathcal{P}_{21}$ has no asymptotic facets, as $u, v, w > 1$ (see \cite[Theorem~4.12]{DH13}). Lemma~\ref{lem:convexccpt}.\eqref{item:cc hyp} shows that the Gromov boundary of $W_{\T}$ is homeomorphic to the limit set of $\Gamma^6$. As the nerve of $W_{\T}$ is a flag-no-square triangulation of a torus, the Gromov boundary of $W_{\T}$ is a Pontryagin sphere, by Lemma~\ref{lem:pontryagin}. Hence, the limit set of the convex cocompact subgroup $\Gamma^6$ is a Pontryagin sphere.
\end{proof}

\begin{remark}\label{rem:computation}
The above values for  $u,v,w$ were obtained as follows. 
We initialized a $21\times 21$ matrix $A$ with variable entries $u,v,w$, and looked for values for $u,v,w$ at which $A$ has rank $7$ and signature $(6,1,14)$. 
To this end, we sampled some $8\times 8$ minors and imposed that their determinant be zero. 
This provided a system of polynomial equations in $u,v,w$. First, we solved these numerically, then the LLL method was used to obtain algebraic numbers of bounded degree close to the numerical solutions. 
These are the values for $u,v,w$ written above, and we verified a posteriori that the matrix obtained for those specific values had the desired signature.
This was done by an exact algebraic computation  with SageMath. 
Detailed computations are provided in the SageMath file (\emph{computations}.ipynb), available on the webpage \cite{computations}.
The fact that only three variables sufficed is of course surprising, but probably is a consequence of the fact that $\T$ is very symmetric.
\end{remark}

\subsection{A two generator group that contains \texorpdfstring{$\Gamma^6$}{Gamma6}  as a finite-index subgroup}\label{sec:two_generators}

\begin{proposition}\label{main6_2nd_half}
  There exists a 2-generated convex cocompact subgroup $\Gamma_2^6$ of $\mathrm{Isom} (\hh^6)$ containing $\Gamma^6$ as a normal subgroup of index $21$. 
\end{proposition}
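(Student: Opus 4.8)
The plan is to realize the evident cyclic symmetry of the Gram matrix $A$ as an isometry of $\hh^6$ normalizing $\Gamma^6$, and then to take $\Gamma_2^6$ to be the group generated by $\Gamma^6$ together with that isometry. Write $e_1,\dots,e_{21}\in\rr^{6,1}$ for normal vectors to the facets of $\mathcal P_{21}$ with $\langle e_i,e_j\rangle = A_{i,j}$, so that $\mathcal P_{21}=\{x\in\hh^6 : \langle x,e_i\rangle\leq 0\text{ for all }i\}$ and $s_i$ is the reflection in $e_i^\perp$. Since $A_{i,j}$ depends only on $i-j \pmod{21}$, the cyclic shift $\sigma\colon i\mapsto i+1 \pmod{21}$ satisfies $A_{\sigma(i),\sigma(j)} = A_{i,j}$. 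I would first check that the assignment $P(e_i) = e_{\sigma(i)}$ defines a linear automorphism $P$ of $\rr^{6,1}$ preserving the form: this is where one uses that the form restricted to $\operatorname{span}(e_i) = \rr^{6,1}$ is non-degenerate, which forces the kernel of $c\mapsto\sum c_ie_i$ to coincide with $\ker A$, a $\sigma$-invariant subspace of $\rr^{21}$. Because $P$ permutes the half-spaces defining $\mathcal P_{21}$, it preserves $\mathcal P_{21}$ and hence the sheet $\hh^6$, so it induces an isometry $\rho\in\mathrm{Isom}(\hh^6)$ with $\rho(\mathcal P_{21})=\mathcal P_{21}$ and $\rho\, e_i^\perp = e_{i+1}^\perp$, whence $\rho s_i\rho^{-1}=s_{i+1}$ (indices mod $21$).

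Next I would set $\Gamma_2^6 := \langle\Gamma^6,\rho\rangle$ and read off its structure. The relation $\rho s_i\rho^{-1}=s_{i+1}$ shows that $\rho$ normalizes $\Gamma^6$ and that $s_j = \rho^{\,j-1}s_1\rho^{-(j-1)}$, so in fact $\Gamma_2^6 = \langle s_1,\rho\rangle$ is $2$-generated and $\Gamma^6\trianglelefteq\Gamma_2^6$. To compute the index I would argue that $\rho$ has order exactly $21$: the $21$ hyperplanes $e_i^\perp$ are pairwise distinct (since $\langle e_i,e_j\rangle = A_{i,j}\notin\{\pm 1\}$ for $i\neq j$, as $u,v,w>1$), so $\rho$ permutes them in a single $21$-cycle and $\rho^k\neq\mathrm{id}$ for $0<k<21$, while $\rho^{21}$ fixes every $e_i$ and is therefore the identity. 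Moreover $\langle\rho\rangle\cap\Gamma^6 = \{1\}$: every power of $\rho$ preserves the chamber $\mathcal P_{21}$, whereas the only element of the reflection group $\Gamma^6 = \mathrm{Refl}(\mathcal P_{21})$ preserving its fundamental chamber is the identity (distinct elements of $\Gamma^6$ send $\mathcal P_{21}$ to distinct chambers of the tessellation, by Poincar\'e's polyhedron theorem). Hence $\Gamma_2^6 = \Gamma^6\rtimes\langle\rho\rangle$ and $[\Gamma_2^6:\Gamma^6] = |\langle\rho\rangle| = 21$.

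Finally I would verify that $\Gamma_2^6$ is convex cocompact: it is discrete, containing the discrete group $\Gamma^6$ with finite index; it has the same limit set as its finite-index subgroup $\Gamma^6$; and since $\Gamma^6$ already acts cocompactly on the convex hull of that limit set, so does the larger group $\Gamma_2^6$. The main obstacle --- really the only point that is not bookkeeping --- is checking that $P$ is well defined on $\rr^{6,1}$, i.e., that the cyclic symmetry of the $21\times 21$ matrix $A$ survives on the $7$-dimensional span of the $e_i$; this rests precisely on the non-degeneracy of the Gram form on that span. I would also record that $\rho$, a priori only an element of $O(6,1)$, lies in the index-two subgroup preserving $\hh^6$, which is automatic from $\rho(\mathcal P_{21}) = \mathcal P_{21}\subseteq\hh^6$; in particular $\rho$ fixes the barycenter of $\mathcal P_{21}$ and is an elliptic isometry (``rotation'') of order $21$.
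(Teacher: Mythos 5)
Your proposal is correct, and it reaches the same group by the same overall strategy as the paper (adjoin to $\Gamma^6$ an order-$21$ isometry realizing the cyclic symmetry of the Gram matrix, and take $\Gamma_2^6=\langle s_1,\sigma\rangle$), but the two key verifications are carried out differently. For the existence of the symmetry, the paper writes down an explicit $\sigma\in\mathrm{SO}(6,1)$ (block rotations by $\tfrac{2\pi}{21},\tfrac{8\pi}{21},\tfrac{10\pi}{21}$) and an explicit unit space-like vector $y$, and verifies by an exact SageMath computation that $\langle \sigma^{i-1}(y),\sigma^{j-1}(y)\rangle=A_{i,j}$, then invokes Vinberg's theorem to conjugate the resulting reflection group onto $\Gamma^6$; you instead deduce the isometry abstractly from the invariance $A_{i+1,j+1}=A_{i,j}$ together with the fact that the form is non-degenerate on the span of the normal vectors (so the kernel of $c\mapsto\sum c_ie_i$ is $\ker A$, which is shift-invariant). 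Your argument is cleaner and computation-free, though it forfeits the explicit matrix model that the paper uses elsewhere. For the index, the paper observes that $[\Gamma_2^6:\Gamma^6]$ divides $21$ and rules out the proper divisors by noting that otherwise $\Gamma^6$ would contain an element of order $3$, $7$, or $21$, impossible in a right-angled Coxeter group where torsion has $2$-power order; you instead show directly that $\langle\rho\rangle\cap\Gamma^6=\{1\}$ because powers of $\rho$ preserve the fundamental chamber while only the identity of $\mathrm{Refl}(\mathcal{P}_{21})$ does, yielding the sharper conclusion $\Gamma_2^6=\Gamma^6\rtimes\zz/21$. Both index arguments are sound. One cosmetic slip: $\mathcal{P}_{21}$ is not compact (otherwise $\Gamma^6$ would be a lattice with boundary $\Sb^5$ rather than a Pontryagin sphere), so ``the barycenter of $\mathcal{P}_{21}$'' is not literally defined; but your conclusion that $\rho$ is elliptic is immediate anyway from the fact that it has finite order.
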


\begin{proof}

Let $$ R_m = \left(\begin{array}{rr}
\cos\left(\frac{2m\pi}{21}\right) & -\sin\left(\frac{2m\pi}{21}\right) \\
\sin\left(\frac{2m\pi}{21}\right) & \cos\left(\frac{2m\pi}{21}\right) \\
\end{array}\right)
\quad \textrm{and} \quad
\sigma = \left( \begin{array}{rrrr} 
R_1 & 0 & 0 & 0\\
0 & R_4 & 0 & 0 \\
0 & 0 & R_5 & 0\\
0 & 0 & 0 & 1 \\
\end{array} \right) \in \mathrm{SO}(6,1).$$

Let $y = \left( \alpha, 0 , \alpha, 0, \alpha, 0, \beta  \right)$, where $\alpha = \tfrac{1}{5}\sqrt{11 + \sqrt{21}}$ and $\beta = \tfrac{1}{5}\sqrt{8 + 3\sqrt{21}}$. Note that $y$ is a space-like vector of $\mathbb{R}^{6,1}$ with length $1$, that is,  $\langle y , y \rangle = 1$, where $\langle \phantom{y}, \phantom{y}  \rangle$ is the standard Minkowski metric on $\rr^{6,1}$. Therefore, for each integer $k \in \{ 1, 2, \dotsc, 21 \}$, a reflection $s_k \in \mathrm{O}(6,1)$ can be defined by 
$$ s_k (x) = x - 2 \langle \sigma^{k-1}(y), x \rangle \sigma^{k-1}(y) \quad\textrm{for all } x \in \mathbb{R}^{6,1}.$$
Note that $s_k = \sigma^{k-1} s_1 \sigma^{-k+1}$ for all $k \in \{ 1, 2, \dotsc, 21 \}$ and we can compute that  $$\langle \sigma^{i-1}(y), \sigma^{j-1}(y) \rangle = A_{i,j} \quad\textrm{for all } i, j \in \{ 1, 2, \dotsc, 21 \}. $$
Again, detailed computations can be found in the SageMath file (\emph{computations}.ipynb), available on the webpage \cite{computations}.

Since the group $\Gamma^6$ and the group generated by $s_1, \ldots, s_{21}$ both lie in $\mathrm{O}(6,1)$ and share the same Gram matrix, they are conjugate by \cite[Theorem~2.1]{Vin85}. Let $\Gamma_2^6$ be the group generated by $\sigma$ and $s_1$. From the relations, for $k \in \{ 1, 2, \dotsc, 21 \}$, $\sigma^{k-1} s_1 \sigma^{-k+1} = s_k$, we get that $\Gamma^6$ is a normal subgroup of $\Gamma_2^6$. The index of $\Gamma^6$ in $\Gamma^6_2$ is equal to the order of $\sigma$ in the quotient $\Gamma_2^6/\Gamma^6$, hence this index is $1,3,7$ or $21$. If the order of $\sigma$ in $\Gamma_2^6/\Gamma^6$ is not $21$ then $\Gamma^6$ contains $\sigma$, $\sigma^3$, or $\sigma^7$, and hence contains an element of order $21$, $7$, or $3$. But, in a Coxeter group, a finite-order element is conjugate into a spherical special subgroup (see \cite[Theorem.~12.3.4.(i)]{DA08}). In particular, in a right-angled Coxeter group, finite-order elements are of order a power of $2$, and hence $\sigma$ must be of order $21$ in $\Gamma_2^6/\Gamma^6$.
\end{proof}

\begin{proof}[Proof of Theorem~\ref{main6}]
A finite-index subgroup of a convex cocompact group is convex cocompact as well, and has the same limit set.
So, Propositions~\ref{main6_1st_half} and \ref{main6_2nd_half}   together imply Theorem~\ref{main6}.
\end{proof}

\subsection{A few remarks}\label{sec:few_remarks}

\begin{proposition}
    The group $\Gamma^6$ is not contained, even virtually, in an arithmetic lattice in $\mathrm{Isom}(\hh^6)$.
\end{proposition}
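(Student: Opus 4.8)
The plan is to use the standard trace-field/arithmeticity criterion of Vinberg, combined with a pair of explicit computations in the Gram matrix $A$. Recall that a Zariski-dense subgroup $\Gamma$ of $\mathrm{Isom}(\hh^6)$ is contained in an arithmetic lattice if and only if the field $K$ generated by the traces of $\mathrm{Ad}(\gamma)$, $\gamma \in \Gamma$, is totally real (a number field), and for every non-identity embedding $\tau : K \hookrightarrow \rr$ distinct from the identity, the form $\tau(f)$ obtained by applying $\tau$ to the entries of the (conjugated, rescaled) Gram matrix is positive definite — equivalently, the Galois conjugate representation is bounded. Since $\Gamma^6$ is convex cocompact with $2$-dimensional limit set, hence Zariski dense in $\mathrm{Isom}(\hh^6)$ by Remark~\ref{rem:zariski_dense}, the criterion applies, and virtual containment in an arithmetic lattice would force the same conclusion for $\Gamma^6$ itself (commensurability preserves the invariant trace field up to finite extension, and preserves arithmeticity). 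So it suffices to exhibit a single Galois conjugate of the form $f_A$ associated to $A$ which is \emph{not} positive definite, i.e.\ not of signature $(6,1,14) \rightsquigarrow (6,0)$ after passing to the quotient by the radical.

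The key observation is that all the entries of $A$ lie in $\qq(\sqrt{21})$, which is a real quadratic field with a single non-trivial Galois automorphism $\tau : \sqrt{21} \mapsto -\sqrt{21}$. Therefore I would: (1) write down the matrix $A^\tau$ obtained from $A$ by replacing $\sqrt{21}$ with $-\sqrt{21}$ throughout — concretely, $u^\tau = \tfrac{1}{50}(27 - 7\sqrt{21})$, $v^\tau = \tfrac{1}{50}(21 - 11\sqrt{21})$, $w^\tau = \tfrac{1}{50}(49 - 9\sqrt{21})$; (2) compute its signature. If $A^\tau$ does \emph{not} have signature $(6,1,14)$ with the same nullity, or even if it has the same signature $(6,1)$ but the issue is that the representation is unbounded — actually the cleanest route is: the Galois conjugate representation $\rho^\tau : W_{\T} \to \mathrm{O}(f_{A^\tau})$ has image a lattice (if $A^\tau$ were $(6,1,14)$) or lands in some $\mathrm{O}(p,q)$; for arithmeticity one needs $\rho^\tau$ to preserve a positive-definite or negative-definite complement, which fails as soon as $A^\tau$, restricted to the quotient by its radical, is \emph{not} of signature $(6,1)$ with the ``compact'' Galois conjugate being definite. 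So the decisive computation is simply: show that $A^\tau$ (mod radical) has indefinite signature that is neither $(6,1)$ with a definite twin nor $(7,0)$ — in practice one checks that $A^\tau$ has signature $(p,q)$ with $q \geq 2$, or $p,q$ both positive in a way incompatible with positive-definiteness of the Galois twin of the ambient form.

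Concretely the cleanest formulation: by Vinberg's arithmeticity criterion for reflection groups (as in \cite{Vin85}, see also the account in \cite{bugaenko}), $\Gamma^6 = \mathrm{Refl}(\mathcal P_{21})$ is arithmetic if and only if the cyclic subfield $k = \qq(\{A_{ij}\}) = \qq(\sqrt{21})$ is totally real (true) and for every embedding $\tau \neq \mathrm{id}$ of $k$ into $\rr$, the matrix $A^\tau$ is positive semi-definite. I would compute, via SageMath exactly as in Remark~\ref{rem:computation} (and record the computation in the companion file \cite{computations}), the signature of $A^\tau$, and show it has at least one strictly negative eigenvalue beyond the radical — so $A^\tau$ is \emph{not} positive semi-definite. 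This violates the criterion, so $\Gamma^6$ is non-arithmetic. Finally, since arithmeticity is a commensurability invariant and any group virtually contained in an arithmetic lattice is itself (virtually, hence — being finitely generated with the relevant Zariski-density — actually) arithmetic in the relevant sense, this rules out virtual containment in an arithmetic lattice as well.

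The main obstacle I anticipate is not conceptual but bookkeeping: correctly invoking the arithmeticity criterion in the presence of the $14$-dimensional radical of $A$ (one must pass to $A$ modulo its radical, a genuine quadratic form of signature $(6,1)$ over $\qq(\sqrt{21})$, and check that Galois conjugation and ``passing to the radical quotient'' commute appropriately), and making sure the single explicit eigenvalue-sign computation for $A^\tau$ is carried out exactly rather than numerically. A secondary subtlety is the passage from ``$\Gamma^6$ non-arithmetic'' to ``$\Gamma^6$ not virtually contained in an arithmetic lattice'': here one uses that $\Gamma^6$ is Zariski dense (so its trace field is a genuine invariant, unchanged up to finite extension under commensuration) together with the fact that a subgroup of an arithmetic lattice that is itself a lattice — or merely Zariski dense of finite covolume in its Zariski closure — inherits arithmeticity; since $\Gamma^6$ is convex cocompact but not a lattice one must phrase this via the trace field directly, as above.
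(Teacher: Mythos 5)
Your overall strategy---Galois-conjugate the Gram matrix and look for an obstruction to arithmeticity---is not the route the paper takes, and as proposed it fails for two reasons. First, the criterion you state is incomplete: for a Zariski-dense subgroup to be contained in an arithmetic lattice of $\mathrm{O}(f,\rr)$ one needs \emph{both} that all nontrivial Galois conjugates of the form are definite \emph{and} that the traces of the adjoint representation are algebraic integers. You omit the integrality condition, and that is precisely the condition that fails here. Second, and fatally, your ``decisive computation'' would come out the wrong way: the conjugate $A^\tau$ under $\sqrt{21}\mapsto-\sqrt{21}$ \emph{is} positive semi-definite. One can see this without any eigenvalue computation from the explicit vectors in \S\ref{sec:two_generators}: $A_{i,j}=\langle\sigma^{i-1}(y),\sigma^{j-1}(y)\rangle$ with $y=(\alpha,0,\alpha,0,\alpha,0,\beta)$, and applying $\tau$ sends $\alpha^2=\tfrac{1}{25}(11+\sqrt{21})$ to a positive number but $\beta^2=\tfrac{1}{25}(8+3\sqrt{21})$ to a \emph{negative} number, so the $-\beta^2$ contribution to each entry becomes $+\tilde\beta^2$ and $A^\tau$ is realized as the Gram matrix of $21$ vectors in Euclidean $\rr^7$. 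So the Galois-twisted representation is bounded, there is no signature obstruction, and your argument produces nothing.

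The paper's proof instead isolates the integrality obstruction. It passes to the sub-reflection group $\mathrm{Refl}(\Delta_1)$ generated by the $7$ generators with indices congruent to $1$ mod $3$, whose Gram matrix $A_1$ (all off-diagonal entries equal to $-u$) has full rank $7$ and signature $(6,1)$; full rank gives irreducibility and hence Zariski density in $\mathrm{O}(f,\rr)$. If a finite-index subgroup of $\mathrm{Refl}(\Delta_1)$ sat inside an arithmetic lattice, then by Vinberg's results (\cite[Theorems 2 and 3]{Vin71}) the whole group $\mathrm{Refl}(\Delta_1)$ would be definable over the ring of real algebraic integers, forcing every element to have algebraic-integer trace. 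But the product of two generators has trace $4u^2+3=\tfrac{21}{625}(173+18\sqrt{21})$, which is not an algebraic integer (its rational trace is $7266/625$). This also handles the ``virtually'' clause cleanly, via \cite[Theorem 3]{Vin71}, rather than through the informal commensurability remarks at the end of your proposal. If you want to salvage your approach, you must add the trace-integrality condition to your criterion and then carry out essentially the paper's computation; the signature computation alone cannot work.
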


\begin{remark}
    We do not know if $\Gamma^6$ is contained, virtually or not, in a non-arithmetic lattice in $\mathrm{Isom}(\hh^6)$.
\end{remark}

\begin{proof}
    The Gram matrix of $\mathcal{P}_{21}$ contains three principal submatrices that are very symmetric. More precisely, for $j=1,2,3$, let $S_j = \{ k \in \{ 1, ..., 21 \}  \,|\, k \equiv j \,\, (\mathrm{mod} \, 3)\}$. Then the principal submatrix $A_j$ of the Gram matrix $A$ obtained by deleting the rows and columns not in $S_j$ is equal to :
    $$
    A_1 = A_2 = A_3 = \left(\begin{array}{rrrrrrr}
1 & -u & -u & -u & -u & -u & -u \\
-u & 1 & -u & -u & -u & -u & -u \\
-u & -u & 1 & -u & -u & -u & -u \\
-u & -u & -u & 1 & -u & -u & -u \\
-u & -u & -u & -u & 1 & -u & -u \\
-u & -u & -u & -u & -u & 1 & -u \\
-u & -u & -u & -u & -u & -u & 1
\end{array}\right)
$$
Each $A_j$ has signature $(6,1,0)$.
Geometrically, the Coxeter polytope $\mathcal{P}_{21}$ is the intersection of three isometric polyhedra $\Delta_j$ of $\hh^6$, for $j=1,2,3$, corresponding to the submatrices $A_j$. Each $\Delta_j$ is the intersection with $\hh^6$ (in the projective model) of a simplex in $\mathbb{RP}^6$ all of whose facets have nonempty intersection with $\mathbb{H}^6$.

Let $f$ be  the bilinear form on $\rr^7$ defined by the symmetric matrix $A_1$ (sometimes called the \emph{Tits} bilinear form associated with $A_1$ in the context of reflection groups), and
let $\mathrm{Refl} (\Delta_1)$ be the  subgroup of $\mathrm{O}(f,\rr)$ generated by reflections across the hyperplanes orthogonal to the canonical basis vectors of $\rr^7$ with respect to the bilinear form $f$.
We claim that $\mathrm{Refl} (\Delta_1)$ is not virtually contained in an arithmetic lattice of $\mathrm{O}(f,\rr)$, which  implies that  $\mathrm{Refl} (\mathcal{P}_{21})$ is not virtually contained in an arithmetic lattice.    

To that end, note that any $\mathrm{Refl} (\Delta_1)$-invariant subspace of $\rr^7$  either is contained in the intersection of the reflection hyperplanes or contains the subspace spanned by their dual vectors (see e.g. \cite[Proposition~19]{Vin71} or \cite[Proposition~3.23]{CG_CC}). Since $A_1$ has full rank, any such subspace is necessarily trivial.  Hence, the action of $\mathrm{Refl} (\Delta_1)$ on $\rr^7$ is irreducible. Since $f$ is Lorentzian, any irreducible subgroup of $\mathrm{O}(f,\rr)$ has Zariski closure containing $\mathrm{SO}(f,\rr)$ (see e.g. \cite[Proposition~1]{benoist_harpe}). Therefore, $\mathrm{Refl} (\Delta_1)$ is Zariski-dense in $\mathrm{O}(f,\rr)$.

Now suppose for a contradiction that there exists a finite index subgroup $\Gamma_1$ of $\mathrm{Refl}(\Delta_1)$ that is contained in an arithmetic lattice $\Lambda$ of $\mathrm{O}(f,\rr)$. It is classical that, in the language of \cite{Vin71}, one obtains from the definition of an arithmetic lattice (see e.g. \cite[\S IX.1]{Margulis} and \cite[Theorem~2]{Vin71}) that the group $\mathrm{Ad}(\Lambda)$ is definable over $\mathcal{A}$, where $\mathcal{A}$ denotes the ring of real\footnote{The fact that $\mathscr{A}$ can be taken to be the ring of {\em real} algebraic integers, as opposed to the entire ring of algebraic integers, uses that $\mathrm{O}(f, \rr)$ is not locally isomorphic to a complex Lie group.
In fact, the arithmetic origin of all arithmetic lattices in $\mathrm{O}(2k,1)$ for $k \geq 1$, as well as all arithmetic lattices in $\mathrm{O}(n,1)$ containing Zariski-dense subgroups of $\mathrm{SO}(n-1,1)$ for any $n \geq 2$, is completely straightforward; such lattices are all of simplest type (see \cite{BBKS} and the references therein).} algebraic integers and $\mathrm{Ad}$ denotes the adjoint representation of $\mathrm{O}(f, \rr)$.
So $\mathrm{Ad}(\Gamma_1)$ is definable over $\mathcal{A}$ as well. 
It then follows from \cite[Theorem~3]{Vin71}  that $\mathrm{Ad}(\mathrm{Refl}(\Delta_1))$ is also definable over~$\mathcal{A}$. On the other hand, since each standard generator $s: \rr^7 \rightarrow \rr^7$ of $\mathrm{Refl}(\Delta_1)$ is given, using the standard basis vector $e_s \in \mathbb{R}^7$, by $v \mapsto v - (v^T A_1 e_s)$, we have that  $\mathrm{Refl}(\Delta_1) \subset \mathrm{O}(f,\qq(\sqrt{21})) \subset \mathrm{O}(f,\overline{\qq}\cap \mathbb{R})$, and hence we obtain again from \cite[Theorem~2]{Vin71} that $\mathrm{Refl}(\Delta_1)$ is itself definable over~$\mathcal{A}$. In particular, by \cite[Lem.~1]{Vin71}, we have that the trace of each element of $\mathrm{Refl}(\Delta_1)$ is an algebraic integer. 
However, the trace of the product of two generators of $\mathrm{Refl} (\Delta_1)$ is equal to $4u^2+3 = \frac{21}{625}(173 + 18 \sqrt{21})$, which is not an algebraic integer.
\end{proof}

\begin{remark}\label{rem:zariski_dense}
Since the subgroup $\mathrm{Refl}(\Delta_1)$ is Zariski-dense in $\mathrm{O}(f, \mathbb{R})$, we have that  the group $\Gamma^6$ is Zariski-dense in $\mathrm{O}(f, \rr)$ as well. 
Hence, the limit set of $\Gamma^6$ is not contained in any round $k$-sphere of $\mathbb{S}^k = \partial \mathbb{H}^{k+1}$, for $k \leqslant 4$.
\end{remark}


\section{Special subgroups with limit set a Menger curve}\label{sec:menger}

In the sequel, a simplical complex $K$ is \textit{non-planar} if $K$ does not embed in $\rr^2$. 
The purpose of this section is to prove the following proposition.

\begin{proposition}\label{prop:menger_surface}
    Let $K$ be a flag-no-square triangulation  of a  closed connected orientable surface $S$ of genus $g\geq 1$.
    Let $n\geq 1$, let $V=\{v_1,\dots,v_n\}\subseteq K^{(0)}$ be a collection of pairwise non-adjacent vertices, and let $L$ be the full subcomplex  spanned by $K^{(0)}\setminus V$. 
    Then the following hold.
    \begin{enumerate}
    \item \label{item:menger subcomplex}
    $L$ is a  non-planar inseparable full subcomplex with $\pcd (L)=1$.

    \item \label{item:menger subgroup} The special subgroup $W_L$ has Gromov boundary  homeomorphic to the Menger curve.
    \end{enumerate}
\end{proposition}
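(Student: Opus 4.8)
The plan is to verify the hypotheses of the well-known characterization of the Menger curve as the Gromov boundary of a hyperbolic Coxeter group — namely, that for a flag-no-square complex $K$, the group $W_K$ has Gromov boundary the Menger curve precisely when $K$ is a \emph{connected, inseparable, non-planar} complex with $\pcd(K)=1$ (this is the combination of work of Kapovich--Kleiner together with the Coxeter-group dictionary in Lemma~\ref{lem:racg}, or more directly the criterion of Dani--Stark--Thomas; one may also invoke Świątkowski's theorem). So item~\eqref{item:menger subgroup} will follow immediately once item~\eqref{item:menger subcomplex} is established, together with the fact that $L$ is flag-no-square (which is automatic since $L$ is a full subcomplex of the flag-no-square complex $K$) and that $W_L$ is one-ended and does not split over a finite or two-ended subgroup (which again translates into inseparability and connectedness of $L$). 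The real content is therefore item~\eqref{item:menger subcomplex}: showing that deleting a set of pairwise non-adjacent vertices from a flag-no-square triangulation of a positive-genus orientable surface leaves a complex that is non-planar, inseparable, and has puncture-respecting cohomological dimension exactly $1$.

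For the topological properties I would argue as follows. First, $L$ is by definition a full subcomplex, and fullness of full subcomplexes of full subcomplexes (noted in the Preliminaries) together with the fact that $K$ is a full subcomplex of itself gives that $L$ is full in $K$; since $K$ is flag-no-square, so is $L$. Next, because the $v_i$ are pairwise non-adjacent, each vertex star $\st{v_i}{K}$ is a disk (the $v_i$ are manifold points of the surface $S$) and these disks are pairwise disjoint; thus the geometric realization $|L|$ is obtained from $S$ by removing $n$ disjoint open disks, i.e.\ $|L|$ is a compact orientable surface $S'$ of genus $g\geq 1$ with $n\geq 1$ boundary circles. Such a surface deformation retracts onto a wedge of circles (a graph), so it is homotopy equivalent to a $1$-complex; this already shows $\overline H^k(L)=0$ for $k\geq 2$, and since $g\geq 1$ it shows $\overline H^1(L)\neq 0$. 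Non-planarity of $L$ is then the statement that a positive-genus surface with boundary does not embed in $\rr^2$, which follows because such a surface contains an embedded nonseparating pair of curves realizing a genus-one subsurface, hence contains a $\Theta$-graph or a $K_5$/$K_{3,3}$ minor — standard. Inseparability (no separating simplex, in the sense that $L$ minus the closed star of any simplex is nonempty and connected, or rather that $L$ is connected and has no cut simplex) follows because removing the open star of a vertex or edge from a compact connected surface-with-boundary of negative Euler characteristic leaves it connected: a surface with boundary has no local cut points, and cutting along a simplex in its interior either disconnects it into pieces that are still joined through the complement or leaves it connected — more carefully, $|L|$ minus a point or an arc in its interior is connected because $S'$ is a connected $2$-manifold with boundary and such manifolds are not disconnected by removing a point or a properly embedded arc's interior only when the arc is separating, so I would instead phrase inseparability directly as: the nerve $L$ contains no full subcomplex which is a single vertex or an edge whose removal disconnects, using that every edge of $L$ lies in a triangle (flagness) and the link of every vertex of $L$ is either a cycle or a path (interior vs.\ boundary vertex of $S'$), which is connected.

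The computation of $\pcd(L)$ is where I expect the main work. By definition $\pcd(L)=\max_\sigma \max\{n:\overline H^n(L\setminus\sigma)\neq 0\}$ over simplices $\sigma$ of $L$ (including $\sigma=\emptyset$). We have just seen $\overline H^n(L)=0$ for $n\geq 2$, so I need $\overline H^2(L\setminus\sigma)=0$ for every nonempty simplex $\sigma$, and $\overline H^1(L\setminus\sigma')\neq 0$ for some $\sigma'$ — the latter is handled by $\sigma'=\emptyset$ since $g\geq 1$. For the vanishing in degree $2$: $L\setminus\sigma$ is an open subcomplex of the surface-with-boundary $S'$ obtained by deleting a closed simplex and all simplices containing it; its geometric realization is again homotopy equivalent to a graph, because deleting a point, an open arc, or an open triangle from a compact surface-with-boundary yields a (possibly non-compact, possibly disconnected) surface-with-boundary, every component of which has nonempty boundary or is punctured and hence is homotopy equivalent to a $1$-complex, so $\overline H^2=0$. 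The slightly delicate point is that $L\setminus\sigma$ in the $\pcd$ sense is the full subcomplex of $L$ on the vertices \emph{not} in $\sigma$ — so I must check its realization is what I claimed; when $\sigma$ is a vertex $v$, $L\setminus v$ is $S'$ with the open star of $v$ removed, still a surface with boundary; when $\sigma$ is an edge or triangle, at least one vertex is removed, and again one gets a surface with boundary after thickening. In all cases no $2$-cycles survive, giving $\pcd(L)=1$. The hardest part will be writing the homotopy-equivalence-to-a-graph claims carefully and uniformly across the cases $\sigma$ a vertex / edge / triangle, keeping track of the difference between removing the open star (as in $\pcd$) versus removing a point from the realization; I would organize this by observing that in every case $|L\setminus\sigma|$ is homotopy equivalent to a finite CW-complex of dimension $\leq 1$, directly from the fact that it is an open subsurface-with-boundary of $S'$, every component of which is noncompact or has boundary.
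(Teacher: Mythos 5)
Your overall strategy for item \eqref{item:menger subcomplex} --- realizing $|L|$ as the compact genus-$g$ surface with $n$ boundary circles obtained by deleting the pairwise disjoint open stars of the $v_i$, and reading off non-planarity and $\pcd(L)=1$ from that picture --- is sound, and is essentially a direct, all-at-once version of what the paper does by removing one vertex at a time and inducting on $n$. The $\pcd$ computation and the non-planarity argument are fine in substance (the paper's phrasing is the same ``thicken $L\setminus\sigma$ to a subsurface with nonempty boundary'' idea).

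The genuine gap is inseparability. As defined in the paper, \emph{inseparable} means $L$ cannot be disconnected by removing a simplex, a pair of non-adjacent vertices, or a suspension of a simplex (i.e.\ a path of two edges, or two triangles sharing an edge). You only address cut vertices and cut edges, and the cases you omit are precisely the ones that are not purely topological consequences of $|L|$ being a surface with boundary: a properly embedded arc in a compact surface with nonempty boundary can perfectly well separate it (a boundary-parallel arc cuts off a disk), so ``removing an arc from $S'$ leaves it connected'' is false in general --- your own text wavers on exactly this point and then does not resolve it. What actually excludes a separating suspension $\{p,q\}\ast\sigma$ is a combinatorial argument using the flag-no-square hypothesis: in the paper's inductive setup, any subcomplex separating $L$ but not the previous complex $M$ must meet the link circle of the deleted vertex $v$ in two non-adjacent vertices, so the suspension points $p,q$ lie in $\lk{v}{M}$, and then any vertex $r$ of $\sigma$ would yield an induced square on $p,q,r,v$ in $M$; the case of a separating non-adjacent pair $\{u,w\}$ is likewise ruled out because $\{u,v,w\}$ would be a separating suspension one level up the induction. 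Your non-inductive setup needs a substitute for this argument, and none is supplied.

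A second, smaller omission concerns item \eqref{item:menger subgroup}: the criterion being invoked (\cite[Theorem 0.1.(2)]{DS21} in the paper) also requires that $L$ not be a join, and the paper checks this explicitly --- a flag-no-square join embedding in a surface would have to be a single vertex joined to a subcomplex of a circle, hence planar, contradicting \eqref{item:menger subcomplex}. Your statement of the characterization drops this hypothesis, so you should add the (easy) verification.
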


Our proof builds on the work in \cite{SW16,DS21,DHW23} and needs the following terminology.
We will say that $K$ is \textit{inseparable} if it is connected and has no separating simplex, no separating pair of non-adjacent vertices, and no separating suspension of a simplex. 
In other words, $K$ cannot be disconnected by removing any of the following \textit{forbidden subcomplexes}: a vertex, an edge, a triangle, two non-adjacent vertices, two edges meeting at a vertex, or two triangles meeting along an edge.

\begin{lemma}\label{lem:menger_vertex}
    Let ${K}$ be a flag-no-square triangulation  of a  closed connected orientable surface $S$ of genus $g\geq 1$.
    Let $M \subseteq {K}$ be a non-planar inseparable full subcomplex. 
    Let $v \in M^{(0)}$ such that $\lk{v}{M}$ is a circle,
    and let $L$ be the full subcomplex of $M$ spanned by $M^{(0)}\setminus \{v\}$.
    Then $L$ is a non-planar inseparable full subcomplex with $\pcd (L)=1$.
\end{lemma}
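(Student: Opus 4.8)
The statement asks us to show that removing a single vertex $v$ with circle link from a non-planar inseparable full subcomplex $M$ of a flag-no-square triangulation of a closed orientable surface yields a complex $L$ that is again non-planar, inseparable, full, and has $\pcd(L)=1$. Fullness is immediate, since a full subcomplex of a full subcomplex is full. For the remaining three properties, the plan is to exploit the local structure near $v$: because $K$ triangulates a surface and is flag-no-square, $\lk{v}{K}$ is a circle of combinatorial length $\geq 5$, and the hypothesis that $\lk{v}{M}$ is also a circle means that $M$ contains an entire ``disc neighborhood'' of $v$ — the closed star $\st{v}{M}$ is a triangulated disc whose boundary is the circle $\lk{v}{M}$, embedded in $M$ as a full subcomplex. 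Deleting $v$ replaces this disc by its boundary circle; informally, $L$ is obtained from $M$ by ``filling in'' a $2$-cell, which does not change connectivity properties that are detected by $1$-dimensional obstructions.

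\emph{Non-planarity.} I would argue by contradiction: if $L$ embedded in $\rr^2$, then since the circle $C=\lk{v}{M}$ is a full subcomplex of $L$, it separates the plane into two regions, one of which is a disc $D$ bounded by $C$; one can then reattach $v$ inside this disc (coning off $C$ within $D$) to obtain a planar embedding of $M$, contradicting non-planarity of $M$. The technical point to be careful about is that the Jordan curve theorem gives a disc in $S^2$, not $\rr^2$, and that reattaching $v$ requires that no other vertices of $M$ lie ``in the way'' — but this is exactly guaranteed by $\st{v}{M}$ being the full join/cone on $C$, i.e.\ $v$ has no neighbors in $M$ outside $C$.

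\emph{Inseparability.} This is where the main work lies, and I expect it to be \textbf{the main obstacle}. One must check that none of the six forbidden subcomplexes (vertex, edge, triangle, two non-adjacent vertices, two edges sharing a vertex, two triangles sharing an edge) separates $L$. The strategy is to relate separation in $L$ to separation in $M$: since $L = M \setminus \{v\}$ and passing from $M$ to $L$ only removes the open disc $\mathring{\st{v}{M}}$ (which is connected and has connected ``boundary'' $C$ inside $L$), a forbidden subcomplex $\Sigma$ separates $L$ if and only if it separates $M$ — unless $\Sigma$ interacts with the link circle $C$ in a degenerate way. One handles this by observing that if removing $\Sigma$ disconnects $L$, then since $C \setminus \Sigma$ is connected or meets only one component (as $C$ is a circle and $\Sigma \cap C$ is a small subcomplex of a pentagon-or-longer cycle, so $C\setminus \Sigma$ stays connected), we can cone $C$ back to $v$ inside one component and conclude $\Sigma$ also separates $M$, a contradiction. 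The delicate cases are when $\Sigma$ is two triangles along an edge or two edges at a vertex and these simplices lie partly inside $\st{v}{M}$; here one uses the flag-no-square condition and the fact that $\lk{v}{K}$ has length $\geq 5$ to rule out pathologies.

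\emph{Computing $\pcd(L)=1$.} Since $L$ is $1$-dimensional-or-$2$-dimensional but non-planar and connected, $\pcd(L)\geq 1$ automatically (a non-empty connected complex has $\overline H^0(L\setminus\sigma)=0$ only when $L\setminus\sigma$ is connected, but taking $\sigma$ to be, e.g., a point of a cycle that doesn't disconnect shows we at least need to find $n\geq 1$; inseparability plus non-planarity force $H^1$ to appear). For the upper bound $\pcd(L)\leq 1$ we must show $\overline H^2(L\setminus\sigma)=0$ for every simplex $\sigma$ of $L$ (including $\sigma=\emptyset$). The key is that $L$ is not a closed surface anymore: removing $v$ from the closed surface $S$ (triangulated by $K$, with $M\subseteq K$) produces an open surface, and $L$, being a subcomplex of $K\setminus\{v\}$ which deformation retracts onto a $1$-complex (a punctured surface is homotopy equivalent to a wedge of circles), has the homotopy type of a graph; hence $\overline H^2$ vanishes for $L$ and, since removing further simplices from a complex homotopy equivalent to a graph keeps it so, for all $L\setminus\sigma$. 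More carefully, I would invoke that $M$ itself already satisfies $\pcd(M)=1$ if one has it — but since that is not given, I will instead note that $M\subseteq K\setminus v$, $K\setminus v$ deformation retracts to its $1$-skeleton-ish spine (as $v$ is a vertex of a surface triangulation, $K\setminus\mathring{\st{v}{K}}$ is a surface with one boundary circle, hence aspherical with free $\pi_1$), and any subcomplex of an aspherical $2$-complex with free fundamental group has trivial $H^2$; the same applies after deleting any simplex $\sigma$. This yields $\pcd(L)=1$.

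Assembling these, $L$ is a non-planar inseparable full subcomplex with $\pcd(L)=1$, as claimed. The one step I would expect to write out most carefully in the final proof is the inseparability verification, going through the forbidden subcomplexes; the coning-off argument is conceptually clean but the degenerate interactions with the pentagon-length link circle $C$ deserve explicit attention.
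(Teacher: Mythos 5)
Your treatment of fullness, non-planarity, and $\pcd(L)=1$ matches the paper's in substance: the paper likewise views $L$ as $M$ minus the open disk bounded by $\lk{v}{M}$, and likewise observes that $L\setminus\sigma$ thickens to a subsurface of $S$ with non-empty boundary, hence is homotopy equivalent to a graph, while non-planarity prevents it from retracting to a tree. The genuine gap is in the inseparability step, which you rightly single out as the main work but then resolve with a claim that is false. You assert that for a forbidden subcomplex $\Sigma$ the set $C\setminus\Sigma$ ``stays connected'' because $\Sigma\cap C$ is a small subcomplex of a long cycle. The situation is exactly the opposite: if $\Sigma$ disconnects $L$ but not $M$, then writing $M\setminus\Sigma=(L\setminus\Sigma)\cup(\st{v}{M}\setminus\Sigma)$, where $\st{v}{M}\setminus\Sigma$ contains $v$, is connected, and meets $L\setminus\Sigma$ in $C\setminus\Sigma$, one sees that every component of $L\setminus\Sigma$ must meet $C\setminus\Sigma$; hence $\Sigma\cap C$ \emph{must} disconnect the circle $C$ and therefore must contain two non-adjacent vertices of $C$. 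This is precisely the case your coning argument cannot handle, and it is not a pathology excluded by the length of $C$: for instance $\Sigma$ could simply be a pair of non-adjacent vertices $\{u,w\}$ of $C$.

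The paper turns this forced conclusion into the proof. Since all vertices of a simplex are pairwise adjacent, $\Sigma$ cannot be a simplex. If $\Sigma=\{p,q\}\ast\sigma$ with $\sigma$ non-empty, its unique non-adjacent pair $\{p,q\}$ must lie on $C=\lk{v}{M}$, and then any vertex $r\in\sigma$ yields an induced square on $p,v,q,r$, contradicting the flag-no-square property. If $\Sigma=\{u,w\}$ is a non-adjacent pair, both vertices lie on $C$, and then $\{u,v,w\}=\{u,w\}\ast\{v\}$ separates $M$, contradicting the ``no separating suspension of a simplex'' clause in the inseparability of $M$. Neither closing move appears in your proposal; in particular you never invoke the suspension clause of inseparability of $M$, without which the two-vertex case cannot be excluded. (A smaller point: your repair of the non-planarity step by coning $C$ off inside a complementary region tacitly needs a planar embedding of $L$ in which $C$ bounds a face; the paper is equally terse there, so I only flag it.)
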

\begin{proof}
We start by noticing that both $M$ and $L$ are   flag-no-squares   complexes of dimension at most $2$, because they are full subcomplexes of $ K$. 
Since $M$ is non-planar and not a sphere, and $L$ is obtained from $M$ by removing the open disk bounded by the circle $\lk vM$, it follows that $L$ is non-planar as well.
 
To compute the cohomological dimension, note that for any (possibly empty) simplex $\sigma$ of $L$, the complex $L\setminus \sigma$ deformation retracts to a graph (it can be thickened to a subsurface with non-empty boundary of $S$).
In particular, the cohomology of $L\setminus \sigma$ always vanishes in degree $2$ and above.
On the other hand, note that $L$ does not deformation retract to a tree, because $L$ is non-planar. So we conclude that $\pcd(L)=1$.

To conclude, we show that $L$ is inseparable. 
Since $M$ is connected and $\lk vM$ is connected, we have that $L$ is connected. 
(This follows from Mayer--Vietoris applied to the decomposition  $M=\st vM \cup L$, for which $\st vM\cap L=\lk vM$.)  
Let $A\subseteq L$ be a subcomplex that disconnects $L$ but not $M$.
We need to check that~$A$ is not among the forbidden subcomplexes.
Since $\lk vM$ is connected, $A\cap \lk vM$ must disconnect $\lk vM$.
Since $\lk vM$ is a circle, $A\cap \lk vM$ must have at least $2$ non-adjacent vertices.
In particular, $A$ cannot be a simplex.
For the remaining cases we argue by contradiction.
Let $A=\{p,q\}\ast \sigma$ be the suspension of a simplex. Then $A\cap \lk vM$  consists of the two suspension points $\{p,q\}$. 
But then for any vertex $r\in \sigma$ we have that $p,q,r,v$ form an induced square in $M$, which cannot happen.
 Finally, let $A=\{u,w\}\subseteq \lk vM$ be a pair of non-adjacent vertices. 
 But then $\{u,v,w\}$ separates $M$.
 This is absurd since $M$ has no separating suspension of a simplex.
 This concludes the proof that $L$ is inseparable. 
\end{proof}

\begin{proof}[Proof of Proposition~\ref{prop:menger_surface}]
First of all, note that since $K$ is a simplicial triangulation of a surface, the star of any vertex is an embedded closed disk, hence the link is a circle.
Moreover, the open stars of the vertices in~$V$ are disjoint.

To prove \eqref{item:menger subcomplex} we argue by induction on $n$.
For $n=1$, the statement follows from Lemma~\ref{lem:menger_vertex} with $M = K$ and $v\in K^{(0)}$ any vertex.
Indeed,  $K$ is connected and non-planar.
Moreover, since $K$ triangulates a surface, it cannot be disconnected by any of the forbidden subcomplexes, so $K$ is inseparable.

For the inductive step, let $n\geq 2$ and let $V=\{v_1,\dots,v_n\}\subseteq K^{(0)}$ be a collection of pairwise non-adjacent vertices.
Let $L$ be the full subcomplex  spanned by $K^{(0)}\setminus V$.
By induction we know that the full subcomplex $M$ spanned by $K^{(0)} \setminus \{v_1,\dots,v_{n-1}\}$ is non-planar and inseparable.
For $i=1,\dots, n-1$ we have that $v_i$ is not adjacent to $v_n$, so  we have that 
$\lk {v_n}{M}=\lk {v_n}{K}$ is a circle.
 Since $L$ is the full subcomplex spanned by $M^{(0)}\setminus \{v_n\}$, the desired properties of $L$ follow from Lemma~\ref{lem:menger_vertex}.

The statement in \eqref{item:menger subgroup} can be deduced  from \eqref{item:menger subcomplex} thanks to  \cite[Theorem 0.1.(2)]{DS21}. We only need to check that  $L$ is not a join.
By contradiction, say $L$ is the join of two subcomplexes $L_1,L_2$. 
Since $L$ has no induced squares and embeds in a surface, we have that (up to reversing the roles) $L_1$ is a single vertex and $L_2$ embeds in a circle. In particular, it follows that $L$ embeds in a disk, contradicting the fact that $L$ is non-planar.
\end{proof}

Since $\Gamma^4$ and $\Gamma^6$ are RACGs defined by flag-no-square triangulations of a torus, we can use Proposition~\ref{prop:menger_surface} with $n=1$ to obtain the following.
\begin{theorem}\label{thm:menger}
    Both $\Gamma^4$ and $\Gamma^6$ contain special subgroups with limit set a Menger curve.
\end{theorem}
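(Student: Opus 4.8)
The plan is to derive Theorem~\ref{thm:menger} as an immediate application of Proposition~\ref{prop:menger_surface} to the two RACGs already constructed. The key observation is that both $\Gamma^4 \cong W_{\partial \tbtm}$ and $\Gamma^6 \cong W_{\T}$ are right-angled Coxeter groups whose defining nerve is a \emph{flag-no-square triangulation of a $2$-torus}: for $\Gamma^4$ this was shown in the proposition of Section~3 (the nerve $\partial \tbtm$ depicted in Figure~\ref{fig:h4}), and for $\Gamma^6$ this was recorded at the start of Section~4 (the nerve $\T$ of Figure~\ref{fig:nerve}). So in each case the hypothesis of Proposition~\ref{prop:menger_surface} is satisfied with $S$ the torus, genus $g=1$.

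First I would fix, in each of the two nerves, a single vertex $v$ (so $n=1$ and $V = \{v\}$), and let $L$ be the full subcomplex spanned by the remaining vertices. Since $V$ is a singleton, the condition that the vertices of $V$ be pairwise non-adjacent is vacuous, so the proposition applies verbatim: part~\eqref{item:menger subcomplex} tells us that $L$ is a non-planar inseparable full subcomplex with $\pcd(L) = 1$, and part~\eqref{item:menger subgroup} tells us that the special subgroup $W_L$ has Gromov boundary homeomorphic to the Menger curve. Because $L$ is a \emph{full} subcomplex of the nerve, Lemma~\ref{lem:racg}.\eqref{item:racg special} identifies $W_L$ with the special subgroup $\langle L^{(0)} \rangle$ of $\Gamma^4$ (resp.\ $\Gamma^6$), and moreover asserts this subgroup is quasiconvex in the ambient RACG.

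It then remains only to transfer this from an abstract statement about Gromov boundaries to a statement about limit sets in $\Sb^{d-1}$. Since $\Gamma^4$ (resp.\ $\Gamma^6$) is convex cocompact in $\mathrm{Isom}(\hh^4)$ (resp.\ $\mathrm{Isom}(\hh^6)$) by Theorem~\ref{main4} and Proposition~\ref{main6_1st_half}, and since $W_L$ sits inside it as a quasiconvex subgroup, Lemma~\ref{lem:convexccpt}.\eqref{item:cc sub} shows $W_L$ is itself convex cocompact, and then Lemma~\ref{lem:convexccpt}.\eqref{item:cc hyp} identifies its limit set with its Gromov boundary, which we have just seen is a Menger curve. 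This completes the argument. I do not anticipate any genuine obstacle here: the entire content has been packaged into Proposition~\ref{prop:menger_surface} and Lemmas~\ref{lem:racg} and~\ref{lem:convexccpt}, and the only thing to verify is the (already established) fact that the nerves of $\Gamma^4$ and $\Gamma^6$ are flag-no-square torus triangulations, so that the proposition is applicable. If anything, the only point worth a sentence of care is spelling out that removing a single vertex from a triangulation of a surface automatically produces a full subcomplex whose complement-of-vertex link is a circle, but this is exactly what the first line of the proof of Proposition~\ref{prop:menger_surface} records, so no new work is needed.
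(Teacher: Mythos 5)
Your proposal is correct and matches the paper's argument exactly: the paper also obtains Theorem~\ref{thm:menger} by applying Proposition~\ref{prop:menger_surface} with $n=1$ to the flag-no-square torus triangulations defining $\Gamma^4$ and $\Gamma^6$, with the passage from Gromov boundary to limit set handled by quasiconvexity of special subgroups and Lemma~\ref{lem:convexccpt}. Your write-up merely makes explicit the bookkeeping that the paper leaves to a single sentence.
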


Moreover, we can find smaller special subgroups with limit set a Menger curve, as follows.

The complex defining  $\Gamma^6$ 
has a collection of $7$ pairwise non-adjacent vertices,  given by the  $7$  centers of the hexagonal tiling of the defining complex; see Figure~\ref{fig:menger}. 
By Proposition~\ref{prop:menger_surface}, the special subgroup generated by the other $14$ vertices has Gromov boundary homeomorphic to the Menger curve.
Since the full subcomplex on these $14$ vertices is $1$-dimensional (i.e., it is a triangle-free graph) the Gromov boundary could also be computed with  \cite[Corollary 1.7]{DHW23}. 
Note that this graph has  $K_{3,3}$ as a minor, i.e., one can obtain $K_{3,3}$ from this graph by deleting edges, deleting vertices, and  contracting edges. So, this graph is non--planar by Wagner's theorem.

\begin{figure}[ht]
    \centering
    \includegraphics[scale=1.1]{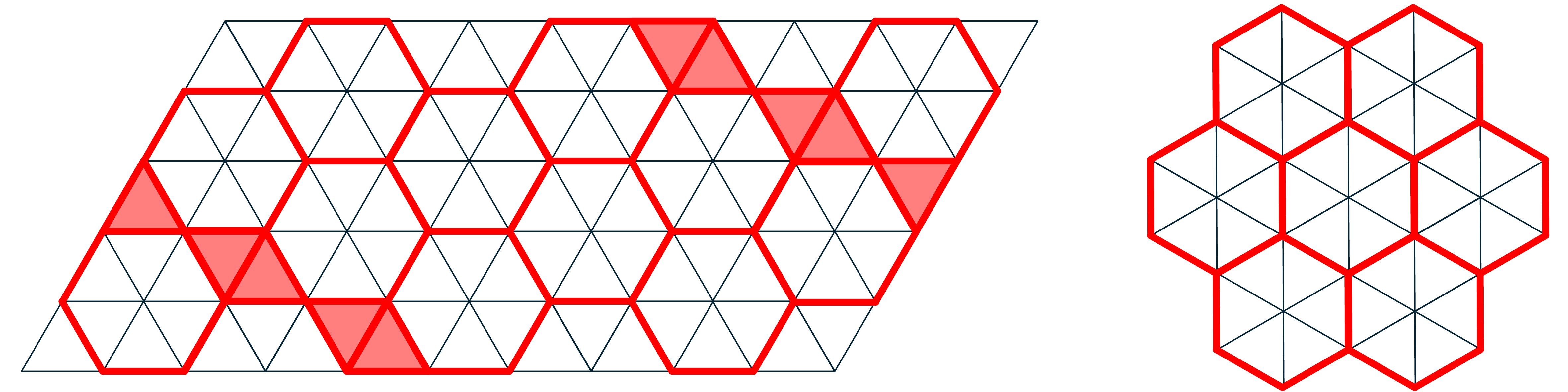}
    \caption{A special subgroup of $\Gamma^d$ with limit set a Menger curve ($d=4$ left, $d=6$ right).}
    \label{fig:menger}
\end{figure}

The complex defining  $\Gamma^4$ has a collection of $15$ pairwise non-adjacent vertices, which one can find by trying to fit a hexagonal grid in this complex; see Figure~\ref{fig:menger}. 
By Proposition~\ref{prop:menger_surface}, the special subgroup generated by the other $35$ vertices has Gromov boundary homeomorphic to the Menger curve.

We note  that, in contrast to the case of $\Gamma^6$, one cannot directly apply \cite[Corollary 1.7]{DHW23} to find a special subgroup with limit set a Menger curve, because the complex $K$ defining  $\Gamma^4$ does not admit a triangle-free inseparable full subgraph.
This can be seen as follows.
Since every vertex in $K$ has degree $6$, in any triangle-free subgraph $L\subseteq K$, every vertex can have degree at most $3$.
On the other hand, if $L$ has a vertex $v$ of degree $1$ or $2$, then the link of $v$ in $L$ is a global cut vertex or a global cut pair respectively.
It follows that every vertex of $L$ must have degree exactly $3$.
In particular, every edge of $L$ is contained in $2$ edge-loops of length $6$, so the complementary regions of $L$ in $K$ are given by regular hexagons. However, $K$ is not tiled by hexagons, because the number of triangles is $100$, which is not a multiple of $6$.

\begin{remark}\label{remark:bourdon}
Bourdon \cite{BO97} previously constructed examples of convex cocompact subgroups of $\mathrm{Isom} (\hh^4)$ with limit set a Menger curve. It was explained to the first-named author by Fran\c{c}ois Gu\'eritaud that at least some of Bourdon's examples are (perhaps up to deformation) commensurable to hyperbolic reflection groups, as is illustrated by the following example.

Consider the Coxeter group $W$ with the following Coxeter diagram.\footnote{Here, we use the classical conventions for Coxeter diagrams, so that an edge between two vertices indicates that the product of the corresponding standard generators has order $3$, an edge with label $\infty$ between two vertices indicates that the product of the corresponding standard generators has infinite order, and the lack of an edge between two vertices indicates that the corresponding standard generators commute.}

\begin{figure}[ht!]\begin{tikzpicture}[thick,scale=0.8, every node/.style={transform shape}]
\newcommand{\unit}{0.75}
\node[draw,circle, inner sep=2pt, minimum size=2pt] (1) at (-5*\unit,0) {{1}};
\node[draw,circle, inner sep=2pt, minimum size=2pt] (3) at (-3*\unit,0) {{3}};
\node[draw,circle, inner sep=2pt, minimum size=2pt] (5) at (-\unit,0) {{5}};
\node[draw,circle, inner sep=2pt, minimum size=2pt] (6) at (\unit,0) {{6}};
\node[draw,circle, inner sep=2pt, minimum size=2pt] (4) at (3*\unit,0) {{4}};
\node[draw,circle, inner sep=2pt, minimum size=2pt] (2) at (5*\unit,0) {{2}};

\draw (1) -- (3) ;
\draw (3) -- (5) node[above,midway] {$\infty$};
\draw (5) -- (6);
\draw (6) -- (4) node[above,midway] {$\infty$};
\draw (4) -- (2);
\end{tikzpicture}
\end{figure}

Denoting by $s_1, \ldots, s_6$ the corresponding standard generators, consider the index-$6$ reflection subgroup $W'$ of $W$ generated by $s_1$, $s_2$, $t_1 := s_3$, $t_2:= s_4$, $t_3:= s_5s_3s_5$, $t_4:= (s_5s_6s_5)s_4(s_5s_6s_5)$, $t_5:= (s_6s_5s_6)s_3(s_6s_5s_6)$, and $t_6:= s_6s_4s_6$ (note that the dihedral subgroup of $W$ generated by $s_5$ and $s_6$ is a retract of $W$, and $W'$ is nothing but the kernel of the retraction map from $W$ onto this dihedral subgroup). The subgroup $\Gamma < W'$ generated by $s_1t_{2k+1}$ and $s_2t_{2k+2}$ for $0 \leq k \leq 2$ is of index $4$ in $W'$ and is isomorphic to the group $\Gamma_{63}$ in Bourdon's notation \cite{BO97}; the latter is, as demonstrated by Benakli \cite{BE92}, a Gromov hyperbolic group with Gromov boundary a Menger curve. Now the group $W$ can be realized as a convex cocompact reflection group in $\mathrm{Isom} (\hh^4)$; indeed, by considering the signature of the associated Gram matrix, one sees that there is a unique way to do this (up to conjugation) so that the hyperbolic distance between the fixed hyperplane of $s_3$ and the fixed hyperplane of $s_5$ is equal to the distance between the fixed hyperplane of $s_4$ and the fixed hyperplane of $s_6$ (and in this case, this distance is $\mathrm{arccosh}\left(\tfrac{3}{4}\sqrt{2}\right)$. The resulting representation of the subgroup $\Gamma < W$ is of the form described in~\cite{BO97}. 

\end{remark}

\printbibliography

\end{document}